\documentclass [twoside,reqno, 12pt] {amsart}



\usepackage{amsfonts}
\usepackage{amssymb}
\usepackage{a4}
\usepackage{color}

\newtheorem{thm}{Theorem}[section]

\newtheorem{lem}[thm]{Lemma}
\newtheorem{prop}[thm]{Proposition}

\theoremstyle{definition}

\numberwithin{equation}{section}

\renewcommand{\Re}{\hbox{Re}\,}
\renewcommand{\Im}{\hbox{Im}\,}

\newcommand{\C}{\mathbb{C}}

\newcommand{\R}{\mathbb{R}}

\newcommand{\supp}{\operatorname{supp}}

\parindent0pt
\parskip6pt

\def\hat{\widehat}
\def\tilde{\widetilde}
\def \bfo {\begin {eqnarray*} }
\def \efo {\end {eqnarray*} }
\def \ba {\begin {eqnarray*} }
\def \ea {\end {eqnarray*} }
\def \beq {\begin {eqnarray}}
\def \eeq {\end {eqnarray}}
\def \supp {\hbox{supp }}

\def \p {\partial}

\def\hat{\widehat}
\def\tilde{\widetilde}
\def \bfo {\begin {eqnarray*} }
\def \efo {\end {eqnarray*} }
\def \ba {\begin {eqnarray*} }
\def \ea {\end {eqnarray*} }
\def \beq {\begin {eqnarray}}
\def \eeq {\end {eqnarray}}
\def \supp {\hbox{supp }}

\def \p {\partial}


\begin{document}

 \title[Determining a magnetic Schr\"odinger operator]{Determining a magnetic Schr\"odinger operator with a continuous magnetic potential from boundary measurements}

\author[Krupchyk]{Katsiaryna Krupchyk}

\address
        {K. Krupchyk, Department of Mathematics and Statistics \\
         University of Helsinki\\
         P.O. Box 68 \\
         FI-00014   Helsinki\\
         Finland}

\email{katya.krupchyk@helsinki.fi}

\author[Uhlmann]{Gunther Uhlmann}

\address
       {G. Uhlmann, Department of Mathematics\\
       University of Washington\\
       Seattle, WA  98195-4350\\
       and
       Department of Mathematics\\ 
       340 Rowland Hall\\
        University of California\\
        Irvine, CA 92697-3875\\
       USA}
\email{gunther@math.washington.edu}

\maketitle

\begin{abstract} 

We show that the knowledge of the set of the Cauchy data on the boundary of a $C^1$ bounded open set in $\R^n$, $n\ge 3$,  for the Schr\"odinger operator with continuous magnetic and bounded electric potentials determines the magnetic field and electric potential inside the set uniquely.  The proof is based on a Carleman estimate for the magnetic Schr\"odinger operator with a gain of two derivatives.

\end{abstract}

\section{Introduction and statement of result}

Let $\Omega\subset \R^n$, $n\ge 3$, be a bounded open set with $C^1$ boundary, and let $u\in C^\infty_0(\Omega)$.  We consider the magnetic Schr\"odinger operator,
\begin{align*}
L_{A,q}(x,D)&u(x):=\sum_{j=1}^n (D_j+A_j(x))^2u(x)+q(x)u(x)\\
&=-\Delta u(x) +A(x)\cdot Du(x) +D\cdot (A(x)u(x))   +((A(x))^2+q(x))u(x),
\end{align*}
where $D=i^{-1}\nabla$, $A\in L^\infty(\Omega,\C^n)$ is the magnetic potential, and $q\in L^\infty(\Omega,\C)$ is the electric potential. Notice that here $Au\in L^\infty(\Omega,\C^n)\cap\mathcal{E}'(\Omega,\C^n)$, and therefore, 
\[
L_{A,q}:C^\infty_0(\Omega)\to H^{-1}(\R^n)\cap\mathcal{E}'(\Omega)
\]
is a bounded operator. 

Let $u\in H^1(\Omega)$ be a solution to 
\begin{equation}
\label{eq_1_1}
L_{A,q}u=0\quad \textrm{in}\quad \Omega,
\end{equation}
in the sense of distributions. Denoting by $\nu$ the unit outer normal to the boundary of $\Omega$, we can define the trace of $\p_\nu u+i(A\cdot\nu)u$ on $\p\Omega$ as an element of $H^{-1/2}(\p \Omega)$ as follows.  Let $g\in H^{1/2}(\p \Omega)$. Then there is a continuous extension of $g$ to $\Omega$, which belongs to $H^1(\Omega)$, and which we shall denote again by $g$.  We set 
\begin{equation}
\label{eq_trace}
\begin{aligned}
\langle \p_\nu u + i(A\cdot\nu) u, g\rangle_{\p \Omega}:=\int_{\Omega} (\nabla u\cdot\nabla g + iA\cdot (u\nabla g-g\nabla u)+ (A^2+q)u g)\,dx.
\end{aligned}
\end{equation}
As $u$ satisfies the equation \eqref{eq_1_1}, the above definition of the trace of $\p_\nu u+i(A\cdot\nu)u$ on $\p\Omega$ is independent of the choice of an extension of $g$.

We introduce the set of the Cauchy data for solutions of the magnetic Schr\"odinger equation in $\Omega$ by 
\[
C_{A,q}:=\{(u|_{\p \Omega}, (\p_\nu u + i(A\cdot\nu) u)|_{\p \Omega}): u\in H^1(\Omega)\textrm{ and }  L_{A,q}u=0 \textrm{ in } \Omega\}.
\]
The inverse boundary value problem for the magnetic Schr\"odinger operator is to determine $A$ and $q$ in $\Omega$ from the set of the Cauchy data $C_{A,q}$.

As it was noticed in \cite{Sun_1993}, there is an obstruction to uniqueness given by the following gauge transformation. Let $\psi\in W^{1,\infty}(\Omega)$. Then for any $u\in H^1(\Omega)$, we have 
\[
e^{-i\psi}L_{A,q}e^{i\psi}u=L_{A+\nabla \psi,q}u,
\]
in the sense of distributions. Furthermore, a computation using \eqref{eq_trace}
shows that 
\[
\langle e^{-i\psi}(\p_\nu+i(A\cdot \nu))e^{i\psi}u,g\rangle_{\p \Omega} = \langle (\p_\nu+i(A+\nabla\psi)\cdot \nu)u,g\rangle_{\p \Omega}
\]
for any $g\in H^{1/2}(\p \Omega)$.  Hence, if $\psi\in W^{1,\infty}(\Omega)$ satisfies $\psi|_{\p \Omega}=0$, then 
\begin{equation}
\label{eq_invariance_cauchy}
C_{A,q}=C_{A+\nabla\psi,q}.
\end{equation} 
Thus, given the set of the Cauchy data $C_{A,q}$ for the magnetic Schr\"odinger operator,  one may only hope to recover the magnetic field $dA$ in $\Omega$, which is defined by 
\[
dA=\sum_{1\le j<k\le n}(\p_{x_j}A_k-\p_{x_k}A_j)dx_j\wedge dx_k
\]
in the sense of distributions.  Here $A=(A_1,\dots,A_n)$.  

As it has been shown by several authors, the knowledge of the set of the Cauchy data $C_{A,q}$ for the magnetic Schr\"odinger operator $L_{A,q}$ determines the magnetic field and the electric potential in $\Omega$ uniquely, 
under certain regularity assumptions on $A$. In \cite{Sun_1993}, this result was established for magnetic potentials in $W^{2,\infty}$, satisfying a smallness condition. In \cite{NakSunUlm_1995}, the smallness condition 
was eliminated for smooth magnetic and electric potentials, and for compactly supported $C^2$ magnetic potentials and $L^\infty$ electric potentials.  The uniqueness results were subsequently extended to 
$C^1$ magnetic potentials in \cite{Tolmasky_1998}, to some less regular but small potentials in \cite{Panchenko_2002}, and to Dini continuous magnetic potentials in \cite{Salo_diss}. To the best of our knowledge, the latter result is the best one currently available, in terms of the regularity properties of magnetic potentials. 

The purpose of this paper is to extend the uniqueness result to the case of magnetic Schr\"odinger operators with magnetic potentials that are merely continuous. Our main result is as follows. 

\begin{thm}
\label{thm_main}

Let $\Omega\subset \R^n$, $n\ge 3$, be a bounded open set with $C^1$ boundary, and assume that $A_1,A_2\in C(\overline{\Omega},\C^n)$ and $q_1,q_2\in L^\infty(\Omega,\C)$. If $C_{A_1,q_1}=C_{A_2,q_2}$, then $dA_1=dA_2$ and $q_1=q_2$ in $\Omega$. 

\end{thm}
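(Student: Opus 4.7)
The proof would follow the complex geometric optics (CGO) paradigm used for the magnetic inverse problem since \cite{Sun_1993,NakSunUlm_1995}, adapted to the limited regularity of the magnetic potentials. The first step is to extract from $C_{A_1,q_1}=C_{A_2,q_2}$ the standard integral identity
\[
\int_\Omega\!\bigl[(A_1-A_2)\cdot(u_2\nabla u_1-u_1\nabla u_2)+\bigl(A_1^2-A_2^2+q_1-q_2\bigr)u_1u_2\bigr]\,dx=0,
\]
valid (up to harmless factors) for $u_1\in H^1(\Omega)$ solving $L_{A_1,q_1}u_1=0$ and $u_2\in H^1(\Omega)$ solving the formally adjoint equation $L_{\overline A_2,\overline q_2}u_2=0$; this uses only the definition \eqref{eq_trace} of the Cauchy traces.

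The bulk of the work is the construction of CGO solutions of the form $u_j(x;h)=e^{x\cdot\zeta_j/h}(a_j(x)+r_j(x;h))$ with $\zeta_j\in\C^n$ isotropic ($\zeta_j\cdot\zeta_j=0$) and $\zeta_1+\zeta_2=-ih\xi$ for a prescribed $\xi\in\R^n$. The amplitudes $a_j$ should satisfy a $\overline\partial$-type transport equation of the form $\zeta_j\cdot(\nabla-iA_j)a_j=0$, which in the usual approach forces a Dini-type regularity on $A_j$ in order for $a_j$ to be sufficiently smooth. To bypass this, I would first extend $A_j$ continuously to a compactly supported element of $C(\R^n,\C^n)$ and mollify it, setting $A_j^\#=A_j*\chi_\e$ for a standard mollifier $\chi_\e$; the mollification is smooth, $\|A_j-A_j^\#\|_{L^\infty}\to 0$ by uniform continuity of $A_j$ on $\overline{\Omega}$, and $\|\nabla A_j^\#\|_{L^\infty}=O(\e^{-1})$. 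I would then solve the transport equation with $A_j^\#$ in place of $A_j$ by the pseudoanalytic method of \cite{Salo_diss}, shift $A_j-A_j^\#$ together with the remaining lower-order contributions onto the right-hand side of the equation for $r_j$, and invoke the Carleman estimate with gain of two derivatives to obtain $r_j\to 0$ in $L^2(\Omega)$ as $h\to 0$, provided $\e=\e(h)$ is chosen appropriately.

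Inserting these CGO solutions into the integral identity and letting $h\to 0$, the leading term extracts the Fourier transform (at $\xi$) of a symbol in $(A_1-A_2)$ that depends on $\zeta_1-\zeta_2$; varying the plane spanned by $\mathrm{Re}\,\zeta_j$ and $\mathrm{Im}\,\zeta_j$ (both orthogonal to $\xi$) and then varying $\xi$, one concludes $\widehat{dA_1}(\xi)=\widehat{dA_2}(\xi)$ for all $\xi\in\R^n$, hence $dA_1=dA_2$. The gauge invariance \eqref{eq_invariance_cauchy} then supplies $\psi\in W^{1,\infty}(\Omega)$ with $\psi|_{\p\Omega}=0$ and $\nabla\psi=A_2-A_1$, reducing matters to the case $A_1=A_2$, after which the standard choice of CGO amplitudes $a_j\equiv 1$ and passage to the limit in the integral identity gives $\widehat{(q_1-q_2)}(\xi)=0$ for all $\xi$, so that $q_1=q_2$.

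The main obstacle is the CGO construction: the two-derivative-gain Carleman estimate must be strong enough to absorb the error $A_j-A_j^\#$ (whose gradient is of size $O(\e^{-1})$) while the transport equation is solved only for the mollified potential. Calibrating $\e=\e(h)$ so that this error is controllable in the norm dictated by the Carleman estimate, and simultaneously so that the leading-order behaviour of the CGO solutions is preserved in the limit, is the technical heart of the argument, and is precisely what permits the descent from Dini continuous to merely continuous magnetic potentials.
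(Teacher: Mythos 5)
Your proposal captures the architecture of the argument --- the integral identity from equal Cauchy data, CGO solutions via mollification of the continuous $A_j$ and a Carleman estimate with two-derivative gain, and the reduction of the $q$-determination to the case $A_1=A_2$ via gauge invariance. This is indeed the same paradigm as the paper. However, two essential steps in the passage from the CGO integral identity to $dA_1=dA_2$ are missing, and a third step in the $q$-determination is not quite correct.

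First, substituting the CGO solutions into the integral identity and letting $h\to0$ does not directly yield the Fourier transform of $A_1-A_2$; it yields a \emph{weighted} Fourier-type integral
\[
(\mu_1+i\mu_2)\cdot\int_\Omega (A_1-A_2)\,e^{ix\cdot\xi}\,e^{\Phi_1+\overline{\Phi_2}}\,dx=0,
\]
where $\Phi_1,\Phi_2$ are the (merely continuous) limiting solutions of the $\overline\partial$-transport equations. Removing the exponential weight $e^{\Phi_1+\overline{\Phi_2}}$ is a genuine issue: the paper invokes a lemma of Eskin--Ralston type (Proposition \ref{prop_eskin_ralston}), proved by a Gauss-theorem argument on the $(x_1,x_2)$-plane, which shows the weight can be dropped precisely because the combined phase $\Phi_1+\overline{\Phi_2}$ itself solves the $\overline\partial$-equation sourced by $(\mu_1+i\mu_2)\cdot(A_1-A_2)$. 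Your proposal glosses over this and asserts the Fourier transform appears directly, which is not the case.

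Second, to apply that lemma one must integrate over all of $\R^n$ rather than over $\Omega$, which requires extensions of $A_1$ and $A_2$ agreeing on $\R^n\setminus\Omega$. This in turn requires knowing that the tangential components of $A_1$ and $A_2$ coincide on $\partial\Omega$. The paper devotes Section \ref{sec_boundary_rec} to this boundary-determination step (adapting Brown--Salo, but without assuming well-posedness of the Dirichlet problem, which forces another use of the Carleman-based solvability result). This step does not appear in your proposal, and the argument cannot close without it.

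Finally, for the $q$-determination, once $A_1-A_2=\nabla\psi$ with $\psi$ compactly supported in $\R^n$, you cannot simply claim $\psi|_{\partial\Omega}=0$ and apply the gauge invariance on $\Omega$: $\psi$ is only known to vanish outside a compact set, not on $\partial\Omega$. The paper handles this by passing to a larger ball $B\supset\supset\Omega$ on which $\psi|_{\partial B}=0$ holds trivially, using a separate lemma (Proposition \ref{prop_Cauchy_data}) that equality of Cauchy data on $\Omega$ propagates to equality of Cauchy data on $B$ when the coefficients agree on $B\setminus\Omega$. Only then does the gauge transformation apply. Your reduction to ``$a_j\equiv1$'' also does not match what the paper does --- the amplitudes remain $e^{\Phi_j^\sharp}$, but with $A_1=A_2$ the combined phase $\Phi_1^\sharp+\overline{\Phi_2^\sharp}\to0$ uniformly, which is what produces the clean Fourier transform of $q_1-q_2$ in the limit.
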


The key ingredient in the proof of Theorem \ref{thm_main} is a construction of  complex geometric optics solutions for the magnetic Schr\"odinger operator with a continuous magnetic potential.  
When constructing such solutions, we shall first derive a Carleman estimate for the magnetic Schr\"odinger operator $L_{A,q}$, with $A\in L^\infty(\Omega,\C^n)$ and $q\in L^\infty(\Omega,\C)$, with a gain of two derivatives, which  is based on the corresponding Carleman estimate for the Laplacian, obtained in \cite{Salo_Tzou_2009}.  To be able to proceed further with our construction, we also need to exploit a smoothing argument, for which it seems necessary to 
assume the continuity of $A$.  

Another important inverse boundary value problem, for which issues of the regularity have been studied extensively, is Calder\'on's problem for the conductivity equation, see \cite{Calderon}.  The unique identifiability of $C^2$ conductivities from boundary measurements was established in \cite{Syl_Uhl_1987}. The regularity assumptions were relaxed to conductivities having  $3/2+\varepsilon$ derivatives in \cite{Brown_1996}, and the uniqueness 
for conductivities having exactly $3/2$ derivatives was obtained in \cite{Paiv_Pan_Uhl}, see also \cite{Brown_Torres_2003}. In \cite{Green_Lassas_Uhlmann_2003}, uniqueness for conormal conductivities in $C^{1+\varepsilon}$ was shown.  The recent work \cite{Hab_Tataru} proves a uniqueness result for Calder\'on's problem with conductivities 
of class $C^1$ and with Lipschitz continuous conductivities, which are close to the identity in a suitable sense. 

The paper is organized as follows.   Section \ref{sec_CGO} contains the construction of complex geometric optics solutions for the magnetic Sch\"odinger operator with a continuous magnetic potential.  Section \ref{sec_proof_main} is devoted to the proof of Theorem \ref{thm_main}. In the proof it becomes essential to determine the values of the tangential component of the magnetic potential on the boundary.  This boundary determination result has been obtained in \cite{Brown_Salo_2006} for continuous magnetic potentials and $C^1$ domains, under the assumption that zero is not a Dirichlet eigenvalue for the magnetic Schr\"odinger operator in $\Omega$.  
Here we no longer assume that the Dirichlet problem is well posed, and to that end, in Section \ref{sec_boundary_rec}  we show how to adapt the method of \cite{Brown_Salo_2006} to the present general situation.

\section{Construction of complex geometric optics solutions} 

\label{sec_CGO}

Let $\Omega\subset\R^n$, $n\ge 3$, be a bounded open set.  Following \cite{DKSU_2007, KenSjUhl2007}, we shall use the method of Carleman estimates to construct complex geometric optics solutions for the magnetic Schr\"odinger equation $L_{A,q}u=0$ in $\Omega$,  with $A\in  C(\overline{\Omega},\C^n)$ and $q\in L^\infty(\Omega,\C)$.

Let us proceed by recalling a Carleman estimate for the semiclassical Laplace operator $-h^2\Delta$ with a gain of two derivatives, established in   \cite{Salo_Tzou_2009}, see also \cite{KenSjUhl2007}. 
Here $h>0$ is a small semiclassical parameter. 
Let $\tilde \Omega$ be an open set in $\R^n$ such that $ \Omega\subset\subset\tilde \Omega$ and 
$\varphi\in C^\infty(\tilde \Omega,\R)$.  Consider the conjugated operator 
\[
P_\varphi=e^{\frac{\varphi}{h}}(-h^2\Delta) e^{-\frac{\varphi}{h}}
\]
and its semiclassical principal symbol
\[
p_\varphi(x,\xi)=\xi^2+2i\nabla \varphi\cdot \xi-|\nabla \varphi|^2, \quad x\in \tilde {\Omega},\quad  \xi\in \R^n. 
\]
Following \cite{KenSjUhl2007}, we say that $\varphi$ is a limiting Carleman weight for $-h^2\Delta$ in $\tilde \Omega$, if $\nabla \varphi\ne 0$ in $\tilde \Omega$ and the Poisson bracket of $\Re p_\varphi$ and $\Im p_\varphi$ satisfies, 
\[
\{\Re p_\varphi,\Im p_\varphi\}(x,\xi)=0 \quad \textrm{when}\quad p_\varphi(x,\xi)=0, \quad (x,\xi)\in \tilde{\Omega}\times \R^n. 
\]
Examples of limiting Carleman weights  are  linear weights $\varphi(x)=\alpha\cdot x$, $\alpha\in \R^n$, $|\alpha|=1$, and logarithmic weights $\varphi(x)=\log|x-x_0|$,  with $x_0\not\in \tilde \Omega$.  In this paper we shall only use the linear weights. 

Our starting point is the following result due to \cite{Salo_Tzou_2009}.
\begin{prop}
Let $\varphi$ be a limiting Carleman weight for the semiclassical Laplacian on $\tilde \Omega$, and let $\varphi_\varepsilon=\varphi+\frac{h}{2\varepsilon}\varphi^2$.  Then 
for $0<h\ll \varepsilon\ll 1$ and $s\in\R$, we have
\begin{equation}
\label{eq_Carleman_lap}
\frac{h}{\sqrt{\varepsilon}}\|u\|_{H^{s+2}_{\emph{scl}}(\R^n)}\le C\|e^{\varphi_\varepsilon/h}(-h^2\Delta)e^{-\varphi_\varepsilon/h}u\|_{H^s_{\emph{scl}}(\R^n)},
\quad C>0,
\end{equation}
 for all $u\in C^\infty_0(\Omega)$.  
\end{prop}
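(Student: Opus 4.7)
The plan is to derive \eqref{eq_Carleman_lap} from the basic $L^2$ Carleman estimate
\[
\frac{h}{\sqrt{\varepsilon}}\|u\|_{L^2(\R^n)}\le C\|P_{\varphi_\varepsilon}u\|_{L^2(\R^n)},\qquad u\in C^\infty_0(\Omega),
\]
established in \cite{KenSjUhl2007} via the convexification $\varphi\mapsto\varphi_\varepsilon$. The general $s\in\R$ case would be reduced to $s=0$ by conjugation with the semiclassical isomorphism $\n{hD}^s\colon H^{s+k}_{\emph{scl}}(\R^n)\to H^k_{\emph{scl}}(\R^n)$, since $[\n{hD}^s,P_{\varphi_\varepsilon}]\n{hD}^{-s}$ is a semiclassical operator of order one with symbol $O(h)$, which is strictly lower order than the target gain $h/\sqrt\varepsilon$ in the regime $h\ll\varepsilon\ll 1$. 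The structural observation driving the whole proof is that the characteristic variety $\{p_\varphi=0\}=\{\xi\cdot\nabla\varphi=0,\ |\xi|^2=|\nabla\varphi|^2\}$ is compact in $\xi$ uniformly in $x\in\overline{\Omega}$, so the phase space splits cleanly into an elliptic region and a bounded-frequency region.

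First I would recall how the $L^2$ estimate arises. Decomposing $P_{\varphi_\varepsilon}=A+iB$ with $A,B$ formally self-adjoint gives
\[
\|P_{\varphi_\varepsilon}u\|_{L^2}^2=\|Au\|_{L^2}^2+\|Bu\|_{L^2}^2+(i[A,B]u,u)_{L^2},
\]
and the limiting Carleman weight condition $\{\Re p_\varphi,\Im p_\varphi\}=0$ on $\{p_\varphi=0\}$, combined with the convex perturbation, forces a lower bound $i[A,B]\ge h^2/(C\varepsilon)$ at the operator level modulo terms absorbed into the first two summands; this is exactly what produces the factor $h/\sqrt\varepsilon$.

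To gain two derivatives, I would fix a cutoff $\chi\in C^\infty_0(\R^n)$ equal to $1$ on $\{|\xi|\le R\}$ with $R^2>\sup_{\tilde\Omega}|\nabla\varphi_\varepsilon|^2$. On $\mathrm{supp}(1-\chi)$ one has $|p_{\varphi_\varepsilon}(x,\xi)|\ge c\n{\xi}^2$, and a standard semiclassical elliptic parametrix yields
\[
\|(1-\chi(hD))u\|_{H^{2}_{\emph{scl}}(\R^n)}\le C\|P_{\varphi_\varepsilon}u\|_{L^2(\R^n)}+Ch\|u\|_{H^{2}_{\emph{scl}}(\R^n)}.
\]
On $\mathrm{supp}\,\chi$ the frequency is uniformly bounded, so trivially $\|\chi(hD)u\|_{H^{2}_{\emph{scl}}}\le C\|u\|_{L^2}$. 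Summing the two contributions, absorbing the $Ch\|u\|_{H^2_{\emph{scl}}}$ term into the left-hand side for $h$ small, and applying the $L^2$ estimate to dominate $\|u\|_{L^2}$ by $(\sqrt\varepsilon/h)\|P_{\varphi_\varepsilon}u\|_{L^2}$ produces the $s=0$ version of \eqref{eq_Carleman_lap}.

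The chief difficulty is quantitative: the available subelliptic gain is only $h/\sqrt\varepsilon$, much weaker than the classical $\sqrt h$ gain available from a non-degenerate Poisson bracket. Every commutator and parametrix remainder in the argument must therefore be of strictly lower order than $h/\sqrt\varepsilon$, which is precisely the margin secured by the scaling $h\ll\varepsilon\ll 1$ together with the convex perturbation $\varphi_\varepsilon=\varphi+\tfrac{h}{2\varepsilon}\varphi^2$.
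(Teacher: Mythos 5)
The paper does not actually prove this proposition; it cites it directly from Salo and Tzou \cite{Salo_Tzou_2009} (with the underlying $L^2$ estimate for the convexified weight going back to \cite{KenSjUhl2007}), so there is no in-paper proof to compare against. Your reconstruction does isolate the two correct structural ingredients: the positive-commutator $L^2$ bound $(h/\sqrt\varepsilon)\|u\|_{L^2}\le C\|P_{\varphi_\varepsilon}u\|_{L^2}$ obtained from $\|P_{\varphi_\varepsilon}u\|^2=\|Au\|^2+\|Bu\|^2+(i[A,B]u,u)$ together with the convex perturbation, and the semiclassical ellipticity of $p_{\varphi_\varepsilon}$ for $|\xi|$ above the (uniformly bounded) characteristic set, which upgrades $L^2$ control to $H^{2}_{\mathrm{scl}}$ control. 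The arithmetic absorbing the $O(h)\|u\|_{H^{2}_{\mathrm{scl}}}$ parametrix remainder into $(h/\sqrt\varepsilon)\|u\|_{H^{2}_{\mathrm{scl}}}$ in the regime $h\ll\varepsilon\ll 1$ is also correct.

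The step that does not hold up as written is the reduction from general $s$ to $s=0$ by setting $v=\langle hD\rangle^{s}u$ and ``applying the $s=0$ estimate to $v$''. The function $v$ is not compactly supported, whereas the $s=0$ estimate is justified, via integration by parts in the $A+iB$ decomposition, only for $u\in C^\infty_0(\Omega)$. This is a real obstruction here, not a formality: for the linear weight $\varphi=\alpha\cdot x$ (the only case used in the paper) one has $\nabla\varphi_\varepsilon=\alpha\bigl(1+\tfrac{h}{\varepsilon}\,\alpha\cdot x\bigr)$, so the coefficients of $P_{\varphi_\varepsilon}$ and the symbol $p_{\varphi_\varepsilon}(x,\xi)$ grow without bound in $x$, the $L^2$ estimate does not extend to $\mathcal{S}(\R^n)$ by density, and the proposed parametrix symbol $(1-\chi(\xi))/p_{\varphi_\varepsilon}(x,\xi)$ is not a globally admissible semiclassical symbol. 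To repair this you must first replace $\varphi_\varepsilon$ by a weight $\tilde\varphi_\varepsilon$ agreeing with $\varphi_\varepsilon$ on a neighborhood of $\overline\Omega$ but with uniformly bounded derivatives on $\R^n$; then $P_{\tilde\varphi_\varepsilon}u=P_{\varphi_\varepsilon}u$ for $u\in C^\infty_0(\Omega)$, the $L^2$ Carleman estimate for $P_{\tilde\varphi_\varepsilon}$ holds for all Schwartz functions, and the calculus for both the conjugation $\langle hD\rangle^{s}P_{\tilde\varphi_\varepsilon}\langle hD\rangle^{-s}$ and the high-frequency parametrix becomes global. Without this localization both the $s\ne 0$ reduction and the elliptic step are incomplete.
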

Here 
\[
\|u\|_{H^s_{\textrm{scl}}(\R^n)}=\|\langle hD \rangle^s u\|_{L^2(\R^n)},\quad \langle\xi  \rangle=(1+|\xi|^2)^{1/2},
\]
is the natural semiclassical norm in the Sobolev space  $H^s(\R^n)$, $s\in\R$.

Next we shall derive a Carleman estimate for the magnetic Schr\"odinger operator $L_{A,q}$ with $A\in L^\infty(\Omega,\C^n)$ and $q\in L^\infty(\Omega,\C)$.  To that end we shall use the estimate \eqref{eq_Carleman_lap} with $s=-1$, and with $\varepsilon>0$ being sufficiently small but fixed, i.e. independent of $h$.  We have the following result. 
 
\begin{prop}
Let $\varphi\in C^\infty(\tilde\Omega,\R)$ be a limiting Carleman weight for the semiclassical Laplacian on $\tilde \Omega$, and assume that $A\in L^\infty(\Omega,\C^n)$ and $q\in L^\infty(\Omega,\C)$. Then 
for $0<h\ll 1$, we have 
\begin{equation}
\label{eq_Carleman_schr}
h\|u\|_{H^{1}_{\emph{scl}}(\R^n)}\le C\|e^{\varphi/h}(h^2L_{A,q})e^{-\varphi/h}u\|_{H^{-1}_{\emph{scl}}(\R^n)},
\end{equation}
 for all $u\in C^\infty_0(\Omega)$.  
\end{prop}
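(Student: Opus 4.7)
The plan is to derive the estimate from the Laplacian Carleman estimate \eqref{eq_Carleman_lap} with $s=-1$ by treating $h^2L_{A,q}-(-h^2\Delta)$ as a first-order perturbation whose contribution carries an extra power of $h$, and then to swap the weight from $\varphi_\e$ back to $\varphi$. Using $e^{\psi/h}D_j e^{-\psi/h}=D_j+i\p_j\psi/h$ for any smooth real weight $\psi$, a direct computation gives
\[
e^{\varphi_\e/h}(h^2L_{A,q})e^{-\varphi_\e/h}u = e^{\varphi_\e/h}(-h^2\Delta)e^{-\varphi_\e/h}u + R_\e u,
\]
where
\[
R_\e u = hA\cdot(hD)u + h(hD)\cdot(Au) + 2ih(A\cdot\nabla\varphi_\e)u + h^2(A^2+q)u,
\]
and $(hD)\cdot(Au)$ denotes the distributional divergence, which lies in $H^{-1}(\R^n)$ since $Au$ is an $L^2(\R^n)$ function of compact support.

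To control the remainder, the three summands other than the divergence embed into $L^2(\R^n)\hookrightarrow H^{-1}_{\emph{scl}}(\R^n)$ and are bounded directly by $Ch\|u\|_{H^{1}_{\emph{scl}}(\R^n)}$, with $C$ depending on $\|A\|_{L^\infty},\|q\|_{L^\infty}$ and $\|\nabla\varphi_\e\|_{L^\infty(\tilde\Omega)}$, the last being $O(1)$ for $h/\e\ll 1$. For the divergence term, the Fourier multiplier $\n{hD}^{-1}(hD_j)$ has symbol of modulus $\le 1$ and is thus bounded on $L^2(\R^n)$ with norm $\le 1$, giving
\[
\|h(hD)\cdot(Au)\|_{H^{-1}_{\emph{scl}}(\R^n)}\le Ch\|A\|_{L^\infty}\|u\|_{H^{1}_{\emph{scl}}(\R^n)}.
\]
Combining these bounds with \eqref{eq_Carleman_lap} at $s=-1$ and the triangle inequality yields
\[
\frac{h}{\sqrt\e}\|u\|_{H^{1}_{\emph{scl}}(\R^n)} \le C\|e^{\varphi_\e/h}(h^2L_{A,q})e^{-\varphi_\e/h}u\|_{H^{-1}_{\emph{scl}}(\R^n)} + C'h\|u\|_{H^{1}_{\emph{scl}}(\R^n)}.
\]
Fixing $\e>0$ so small (independent of $h$) that $1/\sqrt\e\ge 2C'$ absorbs the last term into the left-hand side and produces \eqref{eq_Carleman_schr} with $\varphi_\e$ in place of $\varphi$.

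It remains to swap $\varphi_\e$ for $\varphi$. Setting $M_\e=e^{\varphi^2/(2\e)}$, one has $e^{\varphi_\e/h}=M_\e e^{\varphi/h}$, hence
\[
e^{\varphi_\e/h}(h^2L_{A,q})e^{-\varphi_\e/h} = M_\e\bigl(e^{\varphi/h}(h^2L_{A,q})e^{-\varphi/h}\bigr)M_\e^{-1}.
\]
For the now-fixed $\e$, $M_\e^{\pm 1}$ is a smooth, bounded, nonvanishing function on $\overline\Omega$, independent of $h$, and multiplication by such a function is bounded on $H^{\pm 1}_{\emph{scl}}(\R^n)$ with $h$-uniform constants (the $H^{1}_{\emph{scl}}$ bound is direct, the $H^{-1}_{\emph{scl}}$ bound follows by duality). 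Substituting $u=M_\e w$ for $w\in C^\infty_0(\Omega)$ into the estimate already obtained with $\varphi_\e$ then delivers \eqref{eq_Carleman_schr}. The main technical point is the low regularity of $A$: since $A\in L^\infty$ only, the term $h(hD)\cdot(Au)$ exists merely as a distribution, and the negative-order norm $H^{-1}_{\emph{scl}}(\R^n)$ on the right-hand side of \eqref{eq_Carleman_lap} is precisely what lets this summand be absorbed with the requisite $h$-gain.
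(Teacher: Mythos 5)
Your proof is correct and follows essentially the same route as the paper's: start from the Laplacian Carleman estimate at $s=-1$ with the convexified weight $\varphi_\varepsilon$, show the first-order and zeroth-order terms contribute $\mathcal O(h)$ in $H^{-1}_{\mathrm{scl}}$, absorb them by fixing $\varepsilon$ small, and then undo the convexification via the $h$-independent multiplier $e^{\pm\varphi^2/(2\varepsilon)}$. The only cosmetic difference is that you bound the divergence term $h(hD)\cdot(Au)$ by the Fourier-multiplier characterization of $H^{-1}_{\mathrm{scl}}$, whereas the paper does it by duality against $\psi\in C^\infty_0(\R^n)$ with an integration by parts; both are standard and equivalent.
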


\begin{proof}
In order to prove the estimate \eqref{eq_Carleman_schr} it will be convenient to use the following characterization of the semiclassical norm in the Sobolev space $H^{-1}(\R^n)$, 
\begin{equation}
\label{eq_charac_H_-}
\|v\|_{H_{\textrm{scl}}^{-1}(\R^n)}=\sup_{0\ne \psi\in C^\infty_0(\R^n)}\frac{|\langle v,\psi\rangle_{\R^n} |}{\|\psi\|_{H_{\textrm{scl}}^{1}(\R^n)}},
\end{equation}
where $\langle \cdot,\cdot\rangle_{\R^n}$ is the distribution duality on $\R^n$. 

Let $\varphi_\varepsilon=\varphi+\frac{h}{2\varepsilon}\varphi^2$ be a convexified weight with $\varepsilon>0$ such that $0<h\ll \varepsilon\ll 1$, and let $u \in C^\infty_0(\Omega)$.  Then for all $0\ne \psi\in C^\infty_0(\R^n)$, we have 
\begin{align*}
|\langle e^{\varphi_\varepsilon/h} h^2 A\cdot D(e^{-\varphi_\varepsilon/h}u), \psi \rangle_{\R^n}|&\le 
\int_{\R^n}\bigg|hA\cdot \bigg(-u\bigg(1+\frac{h}{\varepsilon}\varphi\bigg)D\varphi+hDu\bigg)\psi\bigg|dx\\
&\le \mathcal{O}(h)\|u\|_{H^1_{\textrm{scl}}(\R^n)}\|\psi\|_{H^1_{\textrm{scl}}(\R^n)}.
\end{align*}
We also obtain that
\begin{align*}
|\langle e^{\varphi_\varepsilon/h} h^2 D\cdot( A e^{-\varphi_\varepsilon/h}u), \psi \rangle_{\R^n}|&\le \int_{\R^n}|h^2A e^{-\varphi_\varepsilon/h}u\cdot D(e^{\varphi_\varepsilon/h}\psi)|dx\\
&\le \mathcal{O}(h)\|u\|_{H^1_{\textrm{scl}}(\R^n)}\|\psi\|_{H^1_{\textrm{scl}}(\R^n)}.
\end{align*} 
Hence, using \eqref{eq_charac_H_-}, we get
\begin{equation}
\label{eq_2_3}
\|e^{\varphi_\varepsilon/h} h^2 A\cdot D(e^{-\varphi_\varepsilon/h}u) + e^{\varphi_\varepsilon/h} h^2 D\cdot( A e^{-\varphi_\varepsilon/h}u) \|_{H_{\textrm{scl}}^{-1}(\R^n)}\le \mathcal{O}(h)\|u\|_{H^1_{\textrm{scl}}(\R^n)}.
\end{equation}
Notice that the implicit constant in \eqref{eq_2_3} only depends on $\|A\|_{L^\infty(\Omega)}$, $\|\varphi\|_{L^\infty(\Omega)}$ and $\|D\varphi\|_{L^\infty(\Omega)}$.  Now choosing $\varepsilon >0$  sufficiently small but fixed, i.e. independent of $h$, we conclude from the estimate \eqref{eq_Carleman_lap} with $s=-1$ and the estimate \eqref{eq_2_3} that for all $h>0$ small enough, 
\begin{equation}
\label{eq_2_4}
\begin{aligned}
\|e^{\varphi_\varepsilon/h}(-h^2\Delta)e^{-\varphi_\varepsilon/h}u&+e^{\varphi_\varepsilon/h} h^2 A\cdot D(e^{-\varphi_\varepsilon/h}u) + e^{\varphi_\varepsilon/h} h^2 D\cdot( A e^{-\varphi_\varepsilon/h}u) \|_{H_{\textrm{scl}}^{-1}(\R^n)}\\
&\ge \frac{h}{C}\|u\|_{H^1_{\textrm{scl}}(\R^n)},\quad C>0. 
\end{aligned}
\end{equation}
Furthermore, the estimate
\[
\|h^2(A^2+q)u\|_{H^{-1}_{\textrm{scl}}(\R^n)}\le \mathcal{O}(h^2)\|u\|_{H^1_{\textrm{scl}}(\R^n)} 
\]
and the estimate \eqref{eq_2_4} imply that for all $h>0$ small enough, 
\[
\|e^{\varphi_\varepsilon/h}(h^2L_{A,q})e^{-\varphi_\varepsilon/h}u\|_{H_{\textrm{scl}}^{-1}(\R^n)}\ge \frac{h}{C}\|u\|_{H^1_{\textrm{scl}}(\R^n)},\quad C>0. 
\]
Using that 
\[
e^{-\varphi_\varepsilon/h}u=e^{-\varphi/h}e^{-\varphi^2/(2\varepsilon)}u,
\]
we obtain \eqref{eq_Carleman_schr}. The proof is complete.

\end{proof}

Let $\varphi\in C^\infty(\tilde\Omega,\R)$ be a limiting Carleman weight for $-h^2\Delta$.  
Then we have
\[
\langle L_\varphi u,\overline{v}\rangle_{\Omega}= \langle  u,\overline{ L_\varphi^* v}\rangle_{\Omega}, \quad u,v\in C^\infty_0(\Omega),
\]
where $L_\varphi=e^{\varphi/h}(h^2L_{A,q})e^{-\varphi/h}$ and $L_\varphi^*=e^{-\varphi/h}(h^2L_{\overline{A},\overline{q}})e^{\varphi/h}$ is the formal adjoint of $L_\varphi$. We have
\[
L_\varphi^*:C^\infty_0(\Omega)\to H^{-1}(\R^n)\cap\mathcal{E}'(\Omega)
\]
is bounded, and the estimate \eqref{eq_Carleman_schr} also holds for $L_\varphi^*$.

To construct complex geometric optics solutions for the magnetic Schr\"odinger operator we  need to convert the Carleman estimate  \eqref{eq_Carleman_schr} for $L_\varphi^*$ into 
the following solvability result. The proof is essentially well-known, and is included here for the convenience of the reader. We shall write
\begin{align*}
&\|u\|_{H^1_{\textrm{scl}}(\Omega)}^2=\|u\|_{L^2(\Omega)}^2+\|hDu\|_{L^2(\Omega)}^2,\\
&\|v\|_{H^{-1}_{\textrm{scl}}(\Omega)}=\sup_{0\ne \psi\in C_0^\infty(\Omega)}\frac{|\langle v,\psi\rangle_{\Omega}|}{\|\psi\|_{H^1_{\textrm{scl}}(\Omega)}}.
\end{align*}

\begin{prop}
\label{prop_solvability}
Let $A\in L^\infty(\Omega,\C^n)$, $q\in L^\infty(\Omega,\C)$, and let $\varphi$ be a limiting Carleman weight for the semiclassical Laplacian on $\tilde \Omega$. If $h>0$ is small enough, then for any $v\in H^{-1}(\Omega)$, there is a solution $u\in H^1(\Omega)$ of the equation
\[
e^{\varphi/h}(h^2L_{A,q})e^{-\varphi/h}u=v\quad\textrm{in}\quad \Omega,
\]
which satisfies
\[
\|u\|_{H^1_{\emph{scl}}(\Omega)}\le \frac{C}{h}\|v\|_{H^{-1}_{\emph{scl}}(\Omega)}.
\]
\end{prop}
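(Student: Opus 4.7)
The plan is to convert the Carleman estimate for the formal adjoint $L_\varphi^*$ into a solvability statement for $L_\varphi$ in the usual duality fashion, via Hahn--Banach and the Riesz representation of $(H^{-1})^*=H^1$.

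First I would observe that by the Carleman estimate \eqref{eq_Carleman_schr} applied to $L_\varphi^*$, the map $\psi\mapsto L_\varphi^*\psi$ from $C^\infty_0(\Omega)$ into $H^{-1}_{\text{scl}}(\mathbb{R}^n)$ is injective with a quantitative lower bound
\[
\|\psi\|_{H^1_{\text{scl}}(\mathbb{R}^n)}\le \frac{C}{h}\,\|L_\varphi^*\psi\|_{H^{-1}_{\text{scl}}(\mathbb{R}^n)}.
\]
This allows one to define, on the linear subspace $V:=L_\varphi^*(C^\infty_0(\Omega))\subset H^{-1}_{\text{scl}}(\mathbb{R}^n)$, the antilinear functional
\[
T(L_\varphi^*\psi):=\langle v,\overline{\psi}\rangle_{\Omega},\qquad \psi\in C^\infty_0(\Omega),
\]
where on the right we use the given $v\in H^{-1}(\Omega)$ acting on $\psi\in C^\infty_0(\Omega)\subset H^1_0(\Omega)$. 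The definition of $T$ is unambiguous because $L_\varphi^*\psi=0$ forces $\psi=0$ by the Carleman estimate.

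Next I would estimate $T$. By the definition of the semiclassical $H^{-1}(\Omega)$ norm and the Carleman estimate,
\[
|T(L_\varphi^*\psi)|\le \|v\|_{H^{-1}_{\text{scl}}(\Omega)}\|\psi\|_{H^1_{\text{scl}}(\Omega)}\le \|v\|_{H^{-1}_{\text{scl}}(\Omega)}\|\psi\|_{H^1_{\text{scl}}(\mathbb{R}^n)}\le \frac{C}{h}\|v\|_{H^{-1}_{\text{scl}}(\Omega)}\|L_\varphi^*\psi\|_{H^{-1}_{\text{scl}}(\mathbb{R}^n)}.
\]
Hence $T$ is a continuous antilinear form on $V\subset H^{-1}_{\text{scl}}(\mathbb{R}^n)$ with norm at most $(C/h)\|v\|_{H^{-1}_{\text{scl}}(\Omega)}$. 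By the Hahn--Banach theorem $T$ extends to a continuous antilinear functional on all of $H^{-1}_{\text{scl}}(\mathbb{R}^n)$ with the same norm, and by the Riesz representation theorem (identifying $(H^{-1}(\mathbb{R}^n))^*$ with $H^1(\mathbb{R}^n)$ via the distribution duality) there exists $u\in H^1(\mathbb{R}^n)$ with
\[
T(f)=\langle f,\overline{u}\rangle_{\mathbb{R}^n}\ \ \forall f\in H^{-1}_{\text{scl}}(\mathbb{R}^n),\qquad \|u\|_{H^1_{\text{scl}}(\mathbb{R}^n)}\le \frac{C}{h}\|v\|_{H^{-1}_{\text{scl}}(\Omega)}.
\]

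Finally, taking $f=L_\varphi^*\psi$ for arbitrary $\psi\in C^\infty_0(\Omega)$ yields
\[
\langle L_\varphi^*\psi,\overline{u}\rangle_{\mathbb{R}^n}=\langle v,\overline{\psi}\rangle_{\Omega},
\]
which is exactly the statement that $L_\varphi u=v$ in the sense of distributions on $\Omega$. Restricting $u$ to $\Omega$ gives the desired solution, and the norm bound restricts as well. The main obstacle in this argument is a purely bookkeeping one: making sure that the Carleman estimate, which is stated on $\mathbb{R}^n$, is compatible with the $H^{-1}_{\text{scl}}(\Omega)$ norm appearing in the right-hand side of the solvability bound, which is handled by the trivial inequality $\|\psi\|_{H^1_{\text{scl}}(\Omega)}\le \|\psi\|_{H^1_{\text{scl}}(\mathbb{R}^n)}$ for $\psi\in C^\infty_0(\Omega)$ used above, and by noting that any $v\in H^{-1}(\Omega)$ may be paired with $C^\infty_0(\Omega)$ test functions through this very duality.
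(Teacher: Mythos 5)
Your proposal is essentially the paper's proof: convert the Carleman estimate for $L_\varphi^*$ into solvability for $L_\varphi$ by defining a functional on $L_\varphi^*C_0^\infty(\Omega)$, extending by Hahn--Banach, and applying Riesz representation in $H^{-1}(\R^n)$. The one bookkeeping slip is in the (anti)linearity/conjugation accounting: you define $T(L_\varphi^*\psi)=\langle v,\overline\psi\rangle_\Omega$, which is antilinear in $\psi$ under the bilinear distribution pairing, yet you represent it by the \emph{linear} form $f\mapsto\langle f,\overline u\rangle_{\R^n}$; and tracing the conjugations through $\langle L_\varphi w,\overline\psi\rangle=\langle w,\overline{L_\varphi^*\psi}\rangle$ actually yields $L_\varphi u=\overline v$ rather than $v$. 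The paper sidesteps this by taking the linear functional $L(L_\varphi^*w)=\langle w,\overline v\rangle_\Omega$ (your $T$'s complex conjugate), so the Riesz representative $u$ satisfies $\langle L_\varphi u,\overline w\rangle=\langle u,\overline{L_\varphi^*w}\rangle=\overline{\tilde L(L_\varphi^*w)}=\langle v,\overline w\rangle$ without a stray conjugate. This is a trivial fix and does not affect the method.
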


\begin{proof}
Let $v\in H^{-1}(\Omega)$ and let us consider the following complex linear functional,
\[
L: L_\varphi^* C_0^\infty(\Omega)\to \C, \quad L_\varphi^* w \mapsto \langle w, \overline{v}\rangle_\Omega. 
\]
By the Carleman estimate \eqref{eq_Carleman_schr} for $L_\varphi^*$, the map $L$ is well-defined.  Let $w\in C_0^\infty(\Omega)$. Then we have
\begin{align*}
|L(L_\varphi^* w)|=|\langle w, \overline{v}\rangle_\Omega|&\le \|w\|_{H^1_{\textrm{scl}}(\R^n)}\|v\|_{H^{-1}_{\textrm{scl}}(\Omega)}\\
&\le \frac{C}{h}\|v\|_{H^{-1}_{\textrm{scl}}(\Omega)}\|L_\varphi^* w\|_{H^{-1}_{\textrm{scl}}(\R^n)}.
\end{align*}
By the Hahn-Banach theorem, we may extend $L$ to a linear continuous functional $\tilde L$ on $H^{-1}(\R^n)$, without increasing its norm. 
By the Riesz representation theorem, there exists $u\in H^1(\R^n)$ such that for all $\psi\in H^{-1}(\R^n)$,
\[
\tilde L(\psi)=\langle \psi,\overline{u}\rangle_{\R^n}, \quad \textrm{and}\quad \|u\|_{H^1_{\textrm{scl}}(\R^n)}\le \frac{C}{h}\|v\|_{H^{-1}_{\textrm{scl}}(\Omega)}. 
\]
Let us now show that $L_\varphi u=v$ in $\Omega$. To that end, let $w\in C_0^\infty(\Omega)$. Then 
\[
\langle L_\varphi u,\overline{w}\rangle_\Omega=\langle u,\overline{L_\varphi^*w}\rangle_{\R^n} =\overline{\tilde L(L_\varphi^*w)}=\overline{\langle w,\overline{v}\rangle_\Omega}=\langle v,\overline{w}\rangle_\Omega. 
\]
The proof is complete. 
\end{proof}

Our next goal is to construct complex geometric optics solutions for the magnetic Schr\"odinger equation 
\begin{equation}
\label{eq_2_6}
L_{A,q}u=0\quad \textrm{in} \quad \Omega,
\end{equation} 
with $A\in C(\overline{\Omega},\C^n)$ and $q\in L^\infty(\Omega,\C)$, using the solvability result of Proposition \ref{prop_solvability} and 
a smoothing argument. Complex geometric optics solutions are solutions of the form,
\begin{equation}
\label{eq_2_7}
u(x,\zeta;h)=e^{x\cdot\zeta/h} (a(x,\zeta;h)+r(x,\zeta;h)),
\end{equation}
where $\zeta\in\C^n$, $\zeta\cdot\zeta=0$, $|\zeta|\sim 1$,  $a$ is a smooth amplitude, $r$ is a correction term, and $h>0$ is a small parameter.

First, an application of the Tietze extension theorem allows us to extend $A\in C(\overline{\Omega},\C^n)$ to a continuous compactly supported vector field on the whole of $\R^n$. 
We consider the regularization $A^\sharp=A*\Psi_\tau\in C_0^\infty(\R^n,\C^n)$. Here $\tau>0$ is small and 
$\Psi_\tau(x)=\tau^{-n}\Psi(x/\tau)$ is the usual mollifier with $\Psi\in C^\infty_0(\R^n)$, $0\le \Psi\le 1$, and 
$\int \Psi dx=1$.  

We have the following estimates,
\begin{equation}
\label{eq_flat_est}
\|A-A^\sharp\|_{L^\infty(\R^n)}=o(1), \quad \tau \to 0,
\end{equation}
\begin{equation}
\label{eq_flat_est_2}
 \|\p^\alpha A^\sharp\|_{L^\infty(\R^n)}=\mathcal{O}(\tau^{-|\alpha|}),\quad \tau\to 0, \quad \textrm{for all}\quad \alpha,\quad |\alpha|\ge 0.
\end{equation}

It will be convenient to introduce the following bounded operator, 
\[
m_A: H^1(\Omega)\to H^{-1}(\Omega),\quad  m_A(u)=D\cdot(A u),
\]
where the distribution $m_A(u)$ is given by
\[
\langle m_A(u),v \rangle_\Omega=-\int_{\Omega} Au\cdot Dv dx, \quad v\in C_0^\infty(\Omega). 
\]

Let us conjugate $h^2L_{A,q}$ by $e^{x\cdot\zeta/h}$. First, let us compute $e^{-x\cdot\zeta/h}\circ h^2m_A \circ e^{x\cdot \zeta/h}$. When $u\in H^1(\Omega)$ and $v\in C^\infty_0(\Omega)$, we get 
\begin{align*}
\langle e^{-x\cdot\zeta/h}h^2m_A (e^{x\cdot \zeta/h}u),v\rangle_\Omega &=-\int_{\Omega} h^2 A e^{x\cdot \zeta/h}u \cdot D(e^{-x\cdot\zeta/h} v)dx\\
&=-\int_\Omega(hi\zeta\cdot Auv+h^2Au\cdot Dv)dx,
\end{align*}
and therefore, 
\[
e^{-x\cdot\zeta/h}\circ h^2m_A \circ e^{x\cdot \zeta/h}= -hi\zeta\cdot A+h^2 m_A. 
\]
Furthermore, we obtain that 
\begin{align*}
e^{-x\cdot\zeta/h}\circ (-h^2\Delta) \circ e^{x\cdot \zeta/h}=-h^2\Delta -2ih \zeta\cdot D,\\
e^{-x\cdot\zeta/h}\circ h^2(A\cdot D)\circ  e^{x\cdot \zeta/h}=h^2A\cdot D- hi\zeta\cdot A. 
\end{align*}
Hence, we have
\begin{equation}
\label{eq_2_8}
e^{-x\cdot\zeta/h} h^2 \circ L_{A,q} \circ e^{x\cdot \zeta/h}=-h^2\Delta -2ih \zeta\cdot D +h^2A\cdot D- 2 hi\zeta\cdot A+ h^2 m_A + h^2(A^2+q). 
\end{equation}

In this paper we shall work with $\zeta$ depending slightly on $h$, i.e. $\zeta=\zeta_0+\zeta_1$ with $\zeta_0$ being independent of $h$ and $\zeta_1=\mathcal{O}(h)$ as $h\to 0$.  We also assume that $|\Re\zeta_0|=|\Im\zeta_0|=1$. Then it will be convenient to write \eqref{eq_2_8} in the following form,
\begin{align*}
e^{-x\cdot\zeta/h} h^2 \circ L_{A,q}\circ  e^{x\cdot \zeta/h}= &  -h^2\Delta -2ih \zeta_0\cdot D -2ih \zeta_1\cdot D +h^2A\cdot D- 2 hi\zeta_0\cdot A^\sharp\\
&- 2 hi\zeta_0\cdot (A-A^\sharp) - 2 hi\zeta_1\cdot A+ h^2 m_A + h^2(A^2+q). 
\end{align*}

In order that \eqref{eq_2_7} be a solution of \eqref{eq_2_6}, we require that 
\begin{equation}
\label{eq_2_9}
\zeta_0\cdot Da +\zeta_0\cdot A^\sharp a=0\quad\textrm{in}\quad \R^n,
\end{equation}
and 
\begin{equation}
\label{eq_2_10}
\begin{aligned}
e^{-x\cdot\zeta/h} h^2 L_{A,q}& e^{x\cdot \zeta/h}r= -(- h^2\Delta a  +h^2A\cdot D a + h^2 m_A (a)+ h^2(A^2+q)a)\\
& +2ih \zeta_1\cdot Da  + 2 hi\zeta_0\cdot (A-A^\sharp)a +2 hi\zeta_1\cdot Aa=:g \quad\textrm{in}\quad \Omega. 
\end{aligned}
\end{equation}
The equation \eqref{eq_2_9} is the first transport equation and  one looks for its solution in the form $a=e^{\Phi^\sharp}$, where $\Phi^\sharp$ solves the equation
\begin{equation}
\label{eq_2_9_phi}
\zeta_0\cdot\nabla \Phi^\sharp + i\zeta_0\cdot A^\sharp=0\quad \textrm{in}\quad \R^n. 
\end{equation}
As $\zeta_0\cdot\zeta_0=0$ and $|\Re\zeta_0|=|\Im\zeta_0|=1$,  the operator   $N_{\zeta_0}:=\zeta_0\cdot\nabla$ is the $\bar\p$--operator in suitable linear coordinates. Let us introduce an inverse operator defined by
\[
(N_{\zeta_0}^{-1}f)(x) =\frac{1}{2\pi}\int_{\R^2} \frac{f(x-y_1\Re\zeta_0 -y_2\Im\zeta_0)}{y_1+iy_2}dy_1dy_2,\quad f\in C_0(\R^n).
\]
We have the following result, see \cite[Lemma 4.6]{Salo_diss}. 
\begin{lem}
\label{lem_salo_1}
 Let $f\in W^{k,\infty}(\R^n)$, $k\ge 0$, with $\emph{\supp}(f)\subset B(0,R)$. Then $\Phi=N_{\zeta_0}^{-1} f\in W^{k,\infty}(\R^n)$ satisfies $N_{\zeta_0}\Phi=f$ in $\R^n$, and we have 
\begin{equation}
\label{eq_salo_1}
\|\Phi\|_{W^{k,\infty}(\R^n)}\le C\|f\|_{W^{k,\infty}(\R^n)}, 
\end{equation}
where $C=C(R)$. If $f\in C_0(\R^n)$, then $\Phi\in C(\R^n)$. 
\end{lem}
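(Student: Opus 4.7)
The plan is to reduce the operator $N_{\zeta_0}=\zeta_0\cdot\nabla$ to the Cauchy--Riemann operator in two real variables and recognize $N_{\zeta_0}^{-1}$ as a Cauchy transform acting slicewise. Pick an orthonormal frame $e_1=\Re\zeta_0$, $e_2=\Im\zeta_0$, $e_3,\ldots,e_n$ of $\R^n$ and write $x=x_1e_1+x_2e_2+x'$ with $x'\in e_1^\perp\cap e_2^\perp$. Since $\zeta_0\cdot\zeta_0=0$ and $|\Re\zeta_0|=|\Im\zeta_0|=1$, in these coordinates $N_{\zeta_0}=\partial_{x_1}+i\partial_{x_2}=2\bar\partial_z$ with $z=x_1+ix_2$. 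The substitution $u_j=x_j-y_j$ recasts the given formula as
\[
\Phi(x)=\frac{1}{2\pi}\int_{\R^2}\frac{f(u_1,u_2,x')}{(x_1-u_1)+i(x_2-u_2)}\,du_1\,du_2,
\]
i.e.\ a convolution in $(x_1,x_2)$, with $x'$ held fixed, of $f(\cdot,\cdot,x')$ against the Cauchy kernel $1/(2\pi z)$.

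Using the classical identity $\bar\partial_z(1/z)=\pi\delta$, I would conclude that $\bar\partial_z\Phi=\tfrac12 f$, hence $N_{\zeta_0}\Phi=2\bar\partial_z\Phi=f$ in $\R^n$ (in the sense of distributions, and classically wherever $f$ is $C^1$).

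For the $W^{k,\infty}$ bound, the compact support hypothesis is crucial. If $|x'|>R$ then the integrand vanishes identically and $\Phi(x)=0$. If $|x'|\le R$, then $f(u_1,u_2,x')$ vanishes outside the disk $\{(u_1-x_1)^2+(u_2-x_2)^2\le R^2-|x'|^2\}$, so
\[
|\Phi(x)|\le\frac{\|f\|_{L^\infty}}{2\pi}\int_{D((x_1,x_2),\,\sqrt{R^2-|x'|^2})}\frac{du_1\,du_2}{\sqrt{(x_1-u_1)^2+(x_2-u_2)^2}}.
\]
By the standard symmetrization argument, the integral of $1/|z|$ over a planar disk of radius $\rho$ is maximized when the disk is centered at the origin, giving $2\pi\rho\le 2\pi R$. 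Hence $\|\Phi\|_{L^\infty(\R^n)}\le R\|f\|_{L^\infty(\R^n)}$. For higher derivatives I would observe that since the $x$-dependence in the original integrand sits inside $f$, translation invariance permits differentiating under the integral to get $\partial^\alpha\Phi = N_{\zeta_0}^{-1}(\partial^\alpha f)$ for $|\alpha|\le k$, and apply the $L^\infty$ bound just proved to each $\partial^\alpha f$, which again has support in $B(0,R)$.

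Finally, if $f\in C_0(\R^n)$, the continuity of $\Phi$ follows by dominated convergence, using the local integrability of $1/|y_1+iy_2|$ near the origin together with the compact support of $f$ to produce an integrable majorant. The main technical point to handle carefully is the mild singularity of the Cauchy kernel at the origin; everything else reduces to convolution estimates once the coordinates are chosen so that $N_{\zeta_0}$ becomes $2\bar\partial$.
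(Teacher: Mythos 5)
The paper does not prove this lemma itself but cites it from \cite[Lemma 4.6]{Salo_diss}; your argument is the standard one and matches that reference in spirit: reduce $N_{\zeta_0}$ to $2\bar\partial$ in adapted orthonormal coordinates (orthogonality of $\Re\zeta_0$ and $\Im\zeta_0$ following from $\zeta_0\cdot\zeta_0=0$ and $|\Re\zeta_0|=|\Im\zeta_0|$), recognize $N_{\zeta_0}^{-1}$ as the 2D Cauchy transform acting slicewise, obtain $N_{\zeta_0}\Phi=f$ from $\bar\partial(1/\pi z)=\delta$, and use compact support plus rearrangement in the $z$-plane for the $L^\infty$ bound, with translation invariance handling higher derivatives. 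One small slip: when $|x'|\le R$, the slice $f(\cdot,\cdot,x')$ is supported in the disk of radius $\sqrt{R^2-|x'|^2}$ centered at the \emph{origin} in the $(u_1,u_2)$-plane, not at $(x_1,x_2)$ as you wrote; your displayed inequality is in fact already the outcome of the symmetrization step (moving the disk so that it is centered at the singularity), after which it evaluates exactly to $2\pi\sqrt{R^2-|x'|^2}\le 2\pi R$, so the final bound $\|\Phi\|_{L^\infty}\le R\|f\|_{L^\infty}$ is correct. The passage $\partial^\alpha\Phi=N_{\zeta_0}^{-1}(\partial^\alpha f)$ should be justified by a mollification argument when $f\in W^{k,\infty}$ is not smooth, but this is routine and does not affect the conclusion.
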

Thanks to Lemma \ref{lem_salo_1},  the function $\Phi^\sharp(x,\zeta_0; \tau)=N_{\zeta_0}^{-1}(-i\zeta_0\cdot A^\sharp)\in C^\infty(\R^n)$ satisfies the equation \eqref{eq_2_9_phi}. 
Furthermore, the estimate \eqref{eq_flat_est_2} implies that 
\begin{equation}
\label{eq_ampl_est}
\|\p^\alpha \Phi^\sharp \|_{L^\infty(\R^n)}\le C_\alpha \tau^{-|\alpha|}, \quad \textrm{for all}\quad \alpha, \quad |\alpha|\ge 0.   
\end{equation}
It follows from \eqref{eq_flat_est} and \eqref{eq_salo_1} that $\Phi^\sharp (x,\zeta_0;\tau)$ converges uniformly in $\R^n$ to $\Phi (x,\zeta_0):=N_{\zeta_0}^{-1}(-i\zeta_0\cdot A)\in C(\R^n)$ as $\tau\to 0$.

Let us turn now to the equation \eqref{eq_2_10}. First notice that the right hand side $g$Ê of \eqref{eq_2_10} belongs to $H^{-1}(\Omega)$ and we would like to  estimate $\|g\|_{H^{-1}_{\textrm{scl}}(\Omega)}$.  To that end,  let $0\ne \psi\in C_0^\infty(\Omega)$. Then using \eqref{eq_ampl_est} and the fact that $\zeta_1=\mathcal{O}(h)$, we get
\begin{align*}
&|\langle h^2\Delta a, \psi \rangle_\Omega|\le \mathcal{O} (h^2/\tau^{2}) \|\psi\|_{L^2(\Omega)}\le \mathcal{O} (h^2/\tau^{2}) \|\psi\|_{H^{1}_{\textrm{scl}}(\Omega)}, \\
&|\langle h^2 A\cdot Da,\psi\rangle_\Omega|\le \mathcal{O} (h^2/\tau) \|\psi\|_{H^{1}_{\textrm{scl}}(\Omega)},\\
& |\langle 2ih\zeta_1\cdot Da,\psi\rangle_\Omega|\le \mathcal{O} (h^2/\tau) \|\psi\|_{H^{1}_{\textrm{scl}}(\Omega)},\\
& |\langle 2hi\zeta_1\cdot Aa,\psi \rangle_\Omega|\le \mathcal{O} (h^2) \|\psi\|_{H^{1}_{\textrm{scl}}(\Omega)}.
\end{align*}
With the help \eqref{eq_flat_est}, \eqref{eq_flat_est_2}, and  \eqref{eq_ampl_est}, we obtain that 
\begin{align*}
|\langle h^2m_A(a),& \psi \rangle_\Omega|\le \bigg| \int_\Omega h^2 A^\sharp a\cdot D\psi dx\bigg| + \bigg|\int_\Omega h^2 (A-A^\sharp) a\cdot D\psi dx\bigg|\\
&\le \bigg| \int_\Omega h^2 D\cdot (A^\sharp a) \cdot \psi dx\bigg| +\mathcal{O}(h)\|A-A^\sharp\|_{L^\infty}\|a\|_{L^\infty}\|hD\psi\|_{L^2(\Omega)}\\
&\le (\mathcal{O}(h^2/\tau) +\mathcal{O}(h)o_{\tau\to 0}(1)) \|\psi\|_{H^1_{\textrm{scl}}(\Omega)}. 
\end{align*}
Using \eqref{eq_flat_est} and  \eqref{eq_ampl_est}, we have
\[
|\langle 2hi\zeta_0 \cdot(A-A^\sharp)a,\psi\rangle_\Omega |\le \mathcal{O}(h)o_{\tau\to 0}(1) \|\psi\|_{H^1_{\textrm{scl}}(\Omega)}.
\]
Combining the estimates above and the estimate $\|h^2(A^2+q)a\|_{L^2(\Omega)}\le \mathcal{O}(h^2)$, we conclude that 
\[
\|g\|_{H^{-1}_{\textrm{scl}}(\Omega)}\le  \mathcal{O} (h^2/\tau^{2}) + \mathcal{O}(h)o_{\tau\to 0}(1).
\]
Choosing now $\tau=h^\sigma$ with $0<\sigma<1/2$, we get
\begin{equation}
\label{eq_2_11}
\|g\|_{H^{-1}_{\textrm{scl}}(\Omega)}=o(h) \quad \textrm{as}\quad h\to 0. 
\end{equation}
Thanks to Proposition \ref{prop_solvability} and \eqref{eq_2_11}, for $h>0$ small enough, there exists a solution $r\in H^1(\Omega)$ of \eqref{eq_2_10} such that $\|r\|_{H^1_{\textrm{scl}}(\Omega)}=o(1)$ as $h\to 0$.

Summing up, we have proved the following result. 
\begin{prop}
\label{prop_cgo_solutions}
Let $\Omega\subset \R^n$, $n\ge 3$, be a bounded open set.  Let $A\in C(\overline{\Omega},\C^n)$, $q\in L^\infty(\Omega,\C)$, and let $\zeta\in \C^n$ be such that $\zeta\cdot\zeta=0$, $\zeta=\zeta_0+\zeta_1$ with $\zeta_0$ being independent of $h>0$, $|\emph{\Re}\zeta_0|=|\emph{\Im} \zeta_0|=1$, and $\zeta_1=\mathcal{O}(h)$ as $h\to 0$.    Then for all $h>0$ small enough, there exists a solution $u(x,\zeta;h)\in H^1(\Omega)$ to the magnetic Schr\"odinger equation $L_{A,q}u=0$ in $\Omega$, of the form
\[
u(x,\zeta;h)=e^{x\cdot\zeta/h}(e^{\Phi^\sharp(x,\zeta_0;h)}+r(x,\zeta;h)).
\] 
The function  $\Phi^\sharp (\cdot,\zeta_0;h)\in C^\infty(\R^n)$ satisfies 
$\|\p^\alpha \Phi^\sharp\|_{L^\infty(\R^n)}\le C_\alpha h^{-\sigma|\alpha|}$, $0<\sigma<1/2$,   
for all $\alpha$, $|\alpha|\ge 0$, and $\Phi^\sharp (x,\zeta_0;h)$ converges uniformly in $\R^n$ to $\Phi(\cdot,\zeta_0):=N_{\zeta_0}^{-1}(-i\zeta_0\cdot A)\in C(\R^n)$ as $h\to 0$. Here we have extended $A$ to a  $C_0(\R^n,\C^n)$-vector field. The remainder $r$ is such that $\|r\|_{H^1_{\emph{\textrm{scl}}}(\Omega)}=o(1)$ as $h\to 0$. 
\end{prop}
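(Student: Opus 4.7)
The plan is to construct the solution in the ansatz form $u=e^{x\cdot\zeta/h}(e^{\Phi^\sharp}+r)$ by first solving a transport equation for the amplitude and then solving an inhomogeneous equation for the remainder $r$ via the solvability statement in Proposition \ref{prop_solvability}. Since that solvability result produces $H^1$ solutions with a loss of one power of $h$, while the transport equation involves a first derivative of $A$ that need not exist classically, the central device is to replace $A$ by a mollified version $A^\sharp=A*\Psi_\tau$ and to tune the parameter $\tau=\tau(h)$ so that the mollification error is absorbed.

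First I would extend $A\in C(\overline{\Omega},\C^n)$ to a compactly supported continuous vector field on $\R^n$ via the Tietze extension theorem, set $A^\sharp=A*\Psi_\tau$, and record the two basic estimates \eqref{eq_flat_est} and \eqref{eq_flat_est_2}. Then I would conjugate $h^2L_{A,q}$ by $e^{x\cdot\zeta/h}$ with $\zeta=\zeta_0+\zeta_1$, obtaining the expansion in \eqref{eq_2_8}. Isolating the leading-order piece $-2ih\zeta_0\cdot D-2ih\zeta_0\cdot A^\sharp$ and demanding that it annihilate the amplitude yields the transport equation \eqref{eq_2_9}, which I would solve by setting $a=e^{\Phi^\sharp}$ where $\Phi^\sharp=N_{\zeta_0}^{-1}(-i\zeta_0\cdot A^\sharp)$; the existence, smoothness, and estimates \eqref{eq_ampl_est} follow from Lemma \ref{lem_salo_1} combined with \eqref{eq_flat_est_2}, while the uniform convergence $\Phi^\sharp\to\Phi$ as $\tau\to 0$ comes from \eqref{eq_flat_est} and \eqref{eq_salo_1}.

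The remaining and most delicate step is to control the right-hand side $g$ of \eqref{eq_2_10} in the $H^{-1}_{\textrm{scl}}(\Omega)$ norm. I would test against $\psi\in C_0^\infty(\Omega)$ and bound each summand separately: the terms involving $\Delta a$, $A\cdot Da$, $\zeta_1\cdot Da$, and $\zeta_1\cdot Aa$ contribute at worst $\mathcal{O}(h^2/\tau^2)$ thanks to \eqref{eq_ampl_est} and $\zeta_1=\mathcal{O}(h)$; the mollification term $2hi\zeta_0\cdot(A-A^\sharp)a$ contributes $\mathcal{O}(h)\,o_{\tau\to 0}(1)$ by \eqref{eq_flat_est}. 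The subtle piece is $h^2m_A(a)$, which I would split as $h^2m_{A^\sharp}(a)+h^2m_{A-A^\sharp}(a)$: in the smooth piece I move $D$ off $\psi$ to produce $\mathcal{O}(h^2/\tau)$, and in the rough piece I pair $(A-A^\sharp)a\in L^\infty$ against $hD\psi\in L^2$ to produce $\mathcal{O}(h)\,o_{\tau\to 0}(1)$. Combining these gives $\|g\|_{H^{-1}_{\textrm{scl}}(\Omega)}=\mathcal{O}(h^2/\tau^2)+\mathcal{O}(h)\,o_{\tau\to 0}(1)$.

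The main obstacle is the resulting tension: after applying Proposition \ref{prop_solvability} one picks up a factor $h^{-1}$, so I need both $h/\tau^2\to 0$ (requiring $\tau\gg h^{1/2}$) and $o_{\tau\to 0}(1)\to 0$ (requiring $\tau\to 0$). The choice $\tau=h^\sigma$ with $0<\sigma<1/2$ threads both requirements and yields $\|g\|_{H^{-1}_{\textrm{scl}}(\Omega)}=o(h)$. Proposition \ref{prop_solvability} then produces $r\in H^1(\Omega)$ solving \eqref{eq_2_10} with $\|r\|_{H^1_{\textrm{scl}}(\Omega)}\le C h^{-1}\|g\|_{H^{-1}_{\textrm{scl}}(\Omega)}=o(1)$ as $h\to 0$, completing the construction and verifying every claim in the proposition.
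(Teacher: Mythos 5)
Your proposal is correct and follows essentially the same route as the paper: Tietze extension and mollification of $A$, the transport equation solved via $\Phi^\sharp=N_{\zeta_0}^{-1}(-i\zeta_0\cdot A^\sharp)$ with Lemma \ref{lem_salo_1}, the splitting of $h^2m_A(a)$ into smooth and rough parts, the resulting bound $\mathcal{O}(h^2/\tau^2)+\mathcal{O}(h)\,o_{\tau\to 0}(1)$, and the choice $\tau=h^\sigma$, $0<\sigma<1/2$, before invoking Proposition \ref{prop_solvability}. Your grouping of the $\Delta a$, $A\cdot Da$, $\zeta_1\cdot Da$, $\zeta_1\cdot Aa$ terms under a single $\mathcal{O}(h^2/\tau^2)$ bound is coarser than the paper's term-by-term estimates but suffices for the same conclusion.
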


\section{Proof of Theorem \ref{thm_main}}

\label{sec_proof_main}

The first step is the derivation of an integral identity based on the fact that $C_{A_1,q_1}=C_{A_2,q_2}$, see also \cite[Lemma 4.3]{Salo_diss}. 

\begin{prop}
\label{prop_eq_int_identity}
Let $\Omega\subset \R^n$, $n\ge 3$,  be a bounded open set with $C^1$ boundary.  Assume that $A_1,A_2\in L^\infty(\Omega,\C^n)$ and $q_1,q_2\in L^\infty(\Omega,\C)$. If $C_{A_1,q_1}=C_{A_2,q_2}$, then the following integral identity 
\begin{equation}
\label{eq_int_identity}
\int_\Omega i(A_1-A_2)\cdot (u_1\nabla \overline{u_2}-\overline{u_2}\nabla u_1)dx+\int_\Omega(A_1^2-A_2^2+q_1-q_2)u_1\overline{u_2}dx=0 
\end{equation}
holds for any $u_1,u_2\in H^1(\Omega)$ satisfying $L_{A_1,q_1}u_1=0$ in $\Omega$ and $L_{\overline{A_2},\overline{q_2}}u_2=0$ in $\Omega$, respectively.  
\end{prop}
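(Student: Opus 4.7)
The plan is to use the hypothesis $C_{A_1,q_1}=C_{A_2,q_2}$ to produce, for any solution $u_1\in H^1(\Omega)$ of $L_{A_1,q_1}u_1=0$, a companion $\tilde u_1\in H^1(\Omega)$ solving $L_{A_2,q_2}\tilde u_1=0$ with the same Cauchy data as $u_1$, and then to plug $u_1$ and $\tilde u_1$ into the weak formulation \eqref{eq_trace} and exploit the equality of the boundary pairings.

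It is convenient to introduce the bilinear form $B_{A,q}(u,g):=\int_\Omega[\nabla u\cdot\nabla g+iA\cdot(u\nabla g-g\nabla u)+(A^2+q)ug]\,dx$, so that \eqref{eq_trace} reads $B_{A,q}(u,g)=\langle\partial_\nu u+i(A\cdot\nu)u,g\rangle_{\partial\Omega}$ whenever $L_{A,q}u=0$ and $g\in H^1(\Omega)$. A direct inspection of the integrand gives the algebraic swap identity $B_{A,q}(u,g)=B_{-A,q}(g,u)$. Separately, a one-line computation using $\overline D=-D$ shows that $\overline{L_{A,q}u}=L_{-\overline A,\overline q}\overline u$; applied to the hypothesis $L_{\overline{A_2},\overline{q_2}}u_2=0$, this yields $L_{-A_2,q_2}\overline{u_2}=0$, so that $\overline{u_2}$ is itself admissible as both argument and test function in \eqref{eq_trace}.

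Applying \eqref{eq_trace} to $u_1$ with data $(A_1,q_1)$ and to $\tilde u_1$ with data $(A_2,q_2)$, each tested against the $H^1$-extension $\overline{u_2}$ of $\overline{u_2}|_{\partial\Omega}$, the equality of Cauchy data forces the two boundary pairings to coincide, yielding $B_{A_1,q_1}(u_1,\overline{u_2})=B_{A_2,q_2}(\tilde u_1,\overline{u_2})$. To eliminate $\tilde u_1$ on the right, I would use the swap identity to rewrite this as $B_{-A_2,q_2}(\overline{u_2},\tilde u_1)$; this is precisely the right-hand side of \eqref{eq_trace} applied to $\overline{u_2}$ (which solves $L_{-A_2,q_2}\overline{u_2}=0$) tested against $\tilde u_1$, and it therefore equals a boundary pairing depending only on $\tilde u_1|_{\partial\Omega}=u_1|_{\partial\Omega}$. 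Replacing the extension $\tilde u_1$ by $u_1$ and undoing the swap gives $B_{A_2,q_2}(\tilde u_1,\overline{u_2})=B_{A_2,q_2}(u_1,\overline{u_2})$.

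Subtracting $B_{A_2,q_2}(u_1,\overline{u_2})$ from $B_{A_1,q_1}(u_1,\overline{u_2})$ cancels the $\nabla u_1\cdot\nabla\overline{u_2}$ contributions, and the remaining terms assemble into exactly \eqref{eq_int_identity}. The only genuine obstacle is careful sign bookkeeping in the swap and conjugation identities; once those are in hand, everything else is a direct unwinding of the defining property of $C_{A,q}$ and of the weak-trace formula \eqref{eq_trace}, together with the fact that the right-hand side of \eqref{eq_trace} depends only on the boundary value of the test function.
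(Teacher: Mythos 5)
Your proposal is correct and follows essentially the same route as the paper's own proof: introduce a companion solution ($v_2$ in the paper, $\tilde u_1$ here) of $L_{A_2,q_2}$ with the same Cauchy data as $u_1$, observe that $\overline{u_2}$ solves $L_{-A_2,q_2}\overline{u_2}=0$, exploit the symmetry $B_{A,q}(u,g)=B_{-A,q}(g,u)$ of the bilinear form together with the fact that the weak trace pairing \eqref{eq_trace} is independent of the chosen $H^1$-extension, and then subtract. Your explicit formulation of the swap identity is merely a notational repackaging of the paper's chain of equalities \eqref{eq_3_1}--\eqref{eq_3_3}.
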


\begin{proof}
Let $u_1, u_2\in H^1(\Omega)$ be  solutions to $L_{A_1,q_1}u_1=0$ in $\Omega$ and 
\begin{equation}
\label{eq_u_2_solves}
L_{\overline{A_2},\overline{q_2}}u_2=0\quad \textrm{in}\quad \Omega, 
\end{equation}
respectively.  Then the fact that $C_{A_1,q_1}=C_{A_2,q_2}$ implies that there is $v_2\in H^1(\Omega)$ satisfying $L_{A_2,q_2}v_2=0$ in $\Omega$ such that 
\[
u_1=v_2\quad\textrm{and}\quad \p_\nu u_1+i(A_1\cdot\nu)u_1=\p_\nu v_2+i(A_2\cdot \nu)v_2\quad\textrm{on}\quad \p\Omega. 
\]
In particular, 
\begin{equation}
\label{eq_3_1}
\langle \p_\nu u_1+i(A_1\cdot\nu)u_1,\overline{u_2} \rangle_{\p \Omega}=\langle \p_\nu v_2+i(A_2\cdot \nu)v_2, \overline{u_2} \rangle_{\p \Omega}.
\end{equation}

It follows from \eqref{eq_trace} and \eqref{eq_u_2_solves} that 
\begin{equation}
\label{eq_3_2}
\langle \p_\nu \overline{u_2}-i(A_2\cdot\nu)\overline{u_2},\overline{g} \rangle_{\p \Omega}=\int_{\Omega}( \nabla \overline{u_2}\cdot\nabla\overline{g}-iA_2\cdot(\overline{u_2}\nabla\overline{g}-\overline{g}\nabla\overline{u_2})+(A_2^2+q_2)\overline{u_2}\,\overline{g})dx,
\end{equation}
for any $g\in H^1(\Omega)$. 
Using \eqref{eq_trace} and  \eqref{eq_3_2}, we obtain that 
\begin{equation}
\label{eq_3_3}
\langle \p_\nu v_2+i(A_2\cdot \nu)v_2, \overline{u_2} \rangle_{\p \Omega}=\langle \p_\nu \overline{u_2}-i(A_2\cdot\nu)\overline{u_2},v_2 \rangle_{\p \Omega}=\langle \p_\nu \overline{u_2}-i(A_2\cdot\nu)\overline{u_2},u_1 \rangle_{\p \Omega}.
\end{equation}
In the last equality we have also used that $u_1=v_2$ on $\p \Omega$. 
Combining \eqref{eq_3_1} and \eqref{eq_3_3}, we get
\[
\langle \p_\nu u_1+i(A_1\cdot\nu)u_1,\overline{u_2} \rangle_{\p \Omega}=\langle \p_\nu \overline{u_2}-i(A_2\cdot\nu)\overline{u_2},u_1 \rangle_{\p \Omega},
\]
which shows the identity \eqref{eq_int_identity}. The proof is complete. 
\end{proof}

The second step is the determination of the boundary values of the tangential components of the magnetic potentials.  First of all,  we may assume without loss of generality that the normal component of $A_j$ satisfies $A_j\cdot\nu=0$ on $\p \Omega$, $j=1,2$. Indeed, as $\p \Omega\in C^1$, by \cite[Theorem 1.3.3]{Horm_book_1}, there is $\psi_j\in C^{1}(\overline{\Omega})$ which satisfies $\psi_j=0$ on $\p \Omega$ and $(A_j+  \nabla \psi_j)\cdot\nu =0$ on $\p \Omega$, $j=1,2$.  Hence, it  follows from \eqref{eq_invariance_cauchy} that $ A_j$ can be replaced by $A_j+\nabla \psi_j$, $j=1,2$.  Furthermore,  by Proposition \ref{prop_boundary_rec} in Section \ref{sec_boundary_rec}
 applied to each connected component of $\Omega$, we conclude 
that $A_1=A_2$ on $\p \Omega$.
This allows us to extend $A_1$ and $A_2$ to $\R^n$ so that the extensions, which we shall denote by the same letters, agree on $\R^n\setminus\Omega$, and satisfy $A_1,A_2\in C_0(\R^n,\C^n)$,  $j=1,2$.  We also extend $q_j$, $j=1,2$, by zero to $\R^n\setminus\Omega$.

The next step is to use the integral identity \eqref{eq_int_identity} with $u_1$ and $u_2$ being complex geometric optics solutions for the magnetic Schr\"odinger equations in $\Omega$. To construct such solutions,  let $\xi,\mu_1,\mu_2\in\R^n$ be such that $|\mu_1|=|\mu_2|=1$ and $\mu_1\cdot\mu_2=\mu_1\cdot\xi=\mu_2\cdot\xi=0$. 
Similarly to \cite{Sun_1993}, we set 
\begin{equation}
\label{eq_zeta_1_2}
\zeta_1=\frac{ih\xi}{2}+\mu_1 + i\sqrt{1-h^2\frac{|\xi|^2}{4}}\mu_2 , \quad 
\zeta_2=-\frac{ih\xi}{2}-\mu_1+i\sqrt{1-h^2\frac{|\xi|^2}{4}}\mu_2,
\end{equation}
so that $\zeta_j\cdot\zeta_j=0$, $j=1,2$, and $(\zeta_1+\overline{\zeta_2})/h=i\xi$. Here $h>0$ is a small enough semiclassical parameter.  Moreover, $\zeta_1= \mu_1+ i\mu_2+\mathcal{O}(h)$ and $\zeta_2= -\mu_1+ i\mu_2+\mathcal{O}(h)$ as $h\to 0$. 

By Proposition \ref{prop_cgo_solutions},  for all $h>0$ small enough, there exists a solution $u_1(x,\zeta_1;h)\in H^1(\Omega)$ to the magnetic Schr\"odinger equation $L_{A_1,q_1}u_1=0$ in $\Omega$, of the form
\begin{equation}
\label{eq_u_1}
u_1(x,\zeta_1;h)=e^{x\cdot\zeta_1/h}(e^{\Phi_1^\sharp(x,\mu_1+i\mu_2;h)}+r_1(x,\zeta_1;h)),
\end{equation}
where $\Phi_1^\sharp(\cdot,\mu_1+i\mu_2;h)\in C^\infty(\R^n)$ satisfies the estimate 
\begin{equation}
\label{eq_est_phi_1}
\|\p^\alpha \Phi_1^\sharp\|_{L^\infty(\R^n)}\le C_\alpha h^{-\sigma|\alpha|},\quad 0<\sigma<1/2,
\end{equation}
for all $\alpha$, $|\alpha|\ge 0$, $\Phi_1^\sharp(x,\mu_1+i\mu_2;h)$ converges uniformly in $\R^n$ to $\Phi_{1}(x,\mu_1+i\mu_2):=N_{\mu_1+i\mu_2}^{-1}
(-i(\mu_1+i\mu_2)\cdot A_1)\in C(\R^n)$ as $h\to 0$, 
and 
\begin{equation}
\label{eq_est_r_1}
\|r_1\|_{H^1_{\textrm{scl}}(\Omega)}=o(1)\quad \textrm{as}\quad h\to 0.
\end{equation}
Similarly, for all $h>0$ small enough, there exists a solution $u_2(x,\zeta_2;h)\in H^1(\Omega)$ to the magnetic Schr\"odinger equation $L_{\overline{A_2},\overline{q_2}}u_2=0$ in $\Omega$, of the form
\begin{equation}
\label{eq_u_2}
u_2(x,\zeta_2;h)=e^{x\cdot\zeta_2/h}(e^{\Phi_2^\sharp(x,-\mu_1+i\mu_2;h)}+r_2(x,\zeta_2;h)),
\end{equation}
where $\Phi_2^\sharp(\cdot,-\mu_1+i\mu_2;h)\in C^\infty(\R^n)$ satisfies the estimate 
\begin{equation}
\label{eq_est_phi_2}
\|\p^\alpha \Phi_2^\sharp\|_{L^\infty(\R^n)}\le C_\alpha h^{-\sigma|\alpha|},\quad 0<\sigma<1/2,
\end{equation}
for all $\alpha$, $|\alpha|\ge 0$, $\Phi_2^\sharp(x,-\mu_1+i\mu_2;h)$ converges uniformly in $\R^n$ to $\Phi_{2}(x,-\mu_1+i\mu_2):=N_{-\mu_1+i\mu_2}^{-1}
(-i(-\mu_1+i\mu_2)\cdot \overline{A_2})\in C(\R^n)$ as $h\to 0$, 
and 
\begin{equation}
\label{eq_est_r_2}
\|r_2\|_{H^1_{\textrm{scl}}(\Omega)}=o(1)\quad \textrm{as}\quad h\to 0.
\end{equation}

Next we shall substitute $u_1$ and $u_2$, given by \eqref{eq_u_1} and \eqref{eq_u_2}, into the integral identity \eqref{eq_int_identity}, multiply it by $h$, and let $h\to 0$. To that end, we first compute
\begin{align*}
h u_1\nabla\overline{u_2}=&\overline{\zeta_2}e^{ix\cdot\xi}(e^{\Phi_1^\sharp+\overline{\Phi_2^\sharp}}+e^{\Phi_1^\sharp}\overline{r_2}+r_1e^{\overline{\Phi_2^\sharp}}+r_1\overline{r_2})\\
&+he^{ix\cdot\xi}(e^{\Phi_1^\sharp}\nabla e^{\overline{\Phi_2^\sharp}} + e^{\Phi_1^\sharp}\nabla \overline{r_2} + r_1\nabla e^{\overline{\Phi_2^\sharp}} +r_1  \nabla \overline{r_2}).
\end{align*}
Using the estimates \eqref{eq_est_phi_1}, \eqref{eq_est_r_1}, \eqref{eq_est_phi_2} and \eqref{eq_est_r_2}, we obtain that 
\begin{align*}
|\int_\Omega & i(A_1-A_2)\cdot \overline{\zeta_2} e^{ix\cdot\xi}(e^{\Phi_1^\sharp}\overline{r_2}+r_1e^{\overline{\Phi_2^\sharp}}+r_1\overline{r_2})dx|\\
&\le C\|A_1-A_2\|_{L^\infty}
(\|e^{\Phi_1^\sharp}\|_{L^2}\|\overline{r_2}\|_{L^2}+\|r_1\|_{L^2}\|e^{\overline{\Phi_2^\sharp}}\|_{L^2}+\|r_1\|_{L^2}\|\overline{r_2}\|_{L^2})=o(1),
\end{align*}
as $h\to 0$. Furthermore, 
\begin{align*}
|\int_\Omega h i(A_1-A_2)\cdot e^{ix\cdot\xi}(e^{\Phi_1^\sharp}\nabla e^{\overline{\Phi_2^\sharp}} + e^{\Phi_1^\sharp}\nabla \overline{r_2} + r_1\nabla e^{\overline{\Phi_2^\sharp}} +r_1  \nabla \overline{r_2})dx|\\
\le \mathcal{O}(h)(h^{-\sigma}+ h^{-1}o(1)+ o(1)h^{-\sigma}+o(1)h^{-1})=o(1),
\end{align*}
as $h\to 0$. Here $0<\sigma<1/2$.   We also have
\begin{align*}
|h\int_\Omega(A_1^2-A_2^2+q_1-q_2)e^{ix\cdot\xi}(e^{\Phi_1^\sharp+\overline{\Phi_2^\sharp}}+e^{\Phi_1^\sharp}\overline{r_2}+r_1e^{\overline{\Phi_2^\sharp}}+r_1\overline{r_2})dx|=\mathcal{O}(h),
\end{align*}
as $h\to 0$. Hence, substituting  $u_1$ and $u_2$, given by \eqref{eq_u_1} and \eqref{eq_u_2}, into the integral identity \eqref{eq_int_identity}, multiplying it by $h$, and letting $h\to 0$, we get
\begin{equation}
\label{eq_with_phases}
(\mu_1+i\mu_2)\cdot\int_\Omega (A_1-A_2) e^{ix\cdot\xi} e^{\Phi_{1}(x,\mu_1+i\mu_2)+\overline{\Phi_{2}(x,-\mu_1+i\mu_2)}}dx=0,
\end{equation}
where 
\begin{align*}
\Phi_{1}=N_{\mu_1+i\mu_2}^{-1}(- i(\mu_1+i\mu_2)\cdot A_1) \in C(\R^n),\\
\Phi_{2}= N_{-\mu_1+i\mu_2}^{-1} ( - i(-\mu_1+i\mu_2)\cdot \overline{A_2})\in C(\R^n).
\end{align*}

Recalling that $A_1=A_2$ on $\R^n\setminus{\Omega}$, we may replace the integration in \eqref{eq_with_phases} over $\Omega$ by an integration over all of $\R^n$ so that 
\begin{equation}
\label{eq_with_phases_R_n}
(\mu_1+i\mu_2)\cdot\int_{\R^n} (A_1-A_2) e^{ix\cdot\xi} e^{\Phi_{1}(x,\mu_1+i\mu_2)+\overline{\Phi_{2}(x,-\mu_1+i\mu_2)}}dx=0. 
\end{equation}

The next step is to remove the  function $e^{\Phi_{1}+\overline{\Phi_{2}}}$ in the integral \eqref{eq_with_phases_R_n}. First using the following properties of the Cauchy transform, 
\[
\overline{N_{\zeta}^{-1}f}=N_{\overline{\zeta}}^{-1}\overline{f},\quad  N_{-\zeta}^{-1}f=-N_\zeta^{-1}f,
\]
we notice that 
\begin{equation}
\label{eq_sum_phase}
\Phi_{1}+\overline{\Phi_{2}}=N_{\mu_1+i\mu_2}^{-1}(- i(\mu_1+i\mu_2)\cdot (A_1-A_2)).  
\end{equation}

We shall next need the following result, which is due to \cite{Eskin_Ralston_1995}, see also \cite{Salo_2006}.  
\begin{prop} 
\label{prop_eskin_ralston}
Let  $\xi,\mu_1,\mu_2\in\R^n$ be such that $|\mu_1|=|\mu_2|=1$ and $\mu_1\cdot\mu_2=\mu_1\cdot\xi=\mu_2\cdot\xi=0$. 
Let $W\in C_0(\R^n, \C^n)$ and $\phi=N_{\mu_1+i\mu_2}^{-1}(- i(\mu_1+i\mu_2)\cdot W)$. Then 
\begin{equation}
\label{eq_eskin_ralston}
(\mu_1+i\mu_2)\cdot\int_{\R^n} W(x) e^{ix\cdot\xi} e^{\phi(x)} dx=(\mu_1+i\mu_2)\cdot\int_{\R^n} W(x) e^{ix\cdot\xi} dx.
\end{equation}
\end{prop}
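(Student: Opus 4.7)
Set $V := \mu_1 + i\mu_2 \in \C^n$ and $N := V\cdot\nabla = N_{\mu_1+i\mu_2}$. My plan is to rewrite the integrand as an $N$-derivative of a function with sufficient decay in the $(\mu_1,\mu_2)$-plane, and then conclude via the Cauchy-Pompeiu formula on large disks in that plane.

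The first observation is that $\phi$ is designed so that $N\phi = -iV\cdot W$; combined with the Leibniz rule $N(e^\phi) = e^\phi N\phi$, this would give
$$V\cdot W = iN\phi, \qquad V\cdot W \cdot e^\phi = iN(e^\phi - 1),$$
and subtracting yields the pointwise identity $V\cdot W (e^\phi - 1) = iN(e^\phi - 1 - \phi)$. Since $\mu_1\cdot\xi = \mu_2\cdot\xi = 0$ forces $V\cdot\xi = 0$, hence $N(e^{ix\cdot\xi}) = 0$, multiplying by $e^{ix\cdot\xi}$ and using Leibniz once more would reduce the claim to showing
$$\int_{\R^n} N\bigl[(e^\phi - 1 - \phi)\,e^{ix\cdot\xi}\bigr]\,dx = 0.$$

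Next, I would work in orthonormal coordinates $(y_1,y_2,z)$ with $y_j$ aligned with $\mu_j$, so that $N$ becomes $\partial_{y_1}+i\partial_{y_2} = 2\partial_{\bar w}$ with $w = y_1 + iy_2$, and $e^{ix\cdot\xi}$ depends only on $z$. Fubini reduces the display above to a $z$-integral of the 2-plane integral
$$2\int_{\R^2} \partial_{\bar w}(e^\phi - 1 - \phi)(w,z)\,dL(w),$$
which by the Cauchy-Pompeiu formula on $|w|\le R$ equals a boundary integral over $|w|=R$. The decisive input is a decay estimate: if $W$ is supported in $B(0,R_0)$, then inspection of the explicit Cauchy transform formula in Lemma \ref{lem_salo_1} shows that $\phi(w,z)$ vanishes for $|z|>R_0$ and that $|\phi(w,z)| = O(1/|w|)$ as $|w|\to\infty$, uniformly in $z$. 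Consequently $e^\phi - 1 - \phi = O(|\phi|^2) = O(1/|w|^2)$ at infinity in the 2-plane, the boundary integral on $|w|=R$ is $O(1/R)\to 0$, and the full integral vanishes.

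The above calculus presumes $\phi\in C^1$, whereas $W\in C_0$ only gives $\phi\in C(\R^n)$. To fix this I would mollify, replacing $W$ by $W_\varepsilon = W * \rho_\varepsilon \in C_0^\infty(\R^n)$ with supports in a fixed compact set; then $\phi_\varepsilon\in C^\infty$ by Lemma \ref{lem_salo_1}, and $\|\phi_\varepsilon - \phi\|_{L^\infty}\to 0$ by \eqref{eq_salo_1}, so $e^{\phi_\varepsilon}\to e^\phi$ uniformly. Dominated convergence, with integrands supported on a fixed compact set, would transfer the identity from $W_\varepsilon$ to $W$. The main obstacle I anticipate is the uniform decay estimate $|\phi(w,z)| \le C/|w|$, which has to be extracted by hand from the concrete Cauchy transform integral rather than read off from the abstract statement of Lemma \ref{lem_salo_1}; without it, the boundary term in Cauchy-Pompeiu would not vanish and the strategy would collapse.
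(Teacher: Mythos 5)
Your proof is correct and follows essentially the same approach as the paper: reduce to smooth $W$ by mollification, pass to coordinates aligned with $\mu_1,\mu_2$ so that $N_{\mu_1+i\mu_2}$ becomes $\partial_{\bar w}$, integrate by parts in the $w$-plane, and kill the boundary term using the $O(1/|w|)$ decay of $\phi$ coming from the explicit Cauchy transform. The only difference is cosmetic: you subtract $1+\phi$ from $e^\phi$ up front so that a single application of Green's theorem suffices, whereas the paper expands $e^\phi = 1 + \phi + O(|\phi|^2)$ on the boundary circle and disposes of the linear term with a second application of Green's theorem.
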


\begin{proof}
By a standard approximation argument, it suffices to prove \eqref{eq_eskin_ralston} for $W\in C^\infty_0(\R^n, \C^n)$ and 
\begin{equation}
\label{eq_3_15}
\phi=N_{\mu_1+i\mu_2}^{-1}(- i(\mu_1+i\mu_2)\cdot W)\in C^\infty(\R^n).
\end{equation}

We can always assume that $\mu_1=(1,0,\dots, 0)$ and $\mu_2=(0,1,0,\dots, 0)$, so that $\xi=(0,0,\xi'')$, $\xi''\in\R^{n-2}$, and therefore,
\[
(\p_{x_1}+i\p_{x_2})\phi=-i(\mu_1+i\mu_2)\cdot W\quad \textrm{in}\quad \R^n.
\]
Hence, writing $x=(x',x'')$, $x'=(x_1,x_2)$, $x''\in \R^{n-2}$, we get 
\begin{align*}
(\mu_1+i\mu_2)\cdot\int_{\R^n} W(x) e^{ix\cdot\xi} e^{\phi(x)} dx&=i\int_{\R^n}e^{ix''\cdot\xi''}e^{\phi(x)}(\p_{x_1}+i\p_{x_2})\phi(x) dx\\
&=i\int_{\R^{n-2}}e^{ix''\cdot\xi''} h(x'')dx'',
\end{align*}
where 
\begin{align*}
h(x'')=\int_{\R^2} (\p_{x_1}+i\p_{x_2})e^{\phi(x)} dx'&=\lim_{R\to \infty} \int_{|x'|\le R} (\p_{x_1}+i\p_{x_2})e^{\phi(x)} dx'\\
&=\lim_{R\to \infty} \int_{|x'|= R}e^{\phi(x)}(\nu_1+i\nu_2)dS_R(x').
\end{align*}
Here $\nu=(\nu_1,\nu_2)$ is the unit outer normal to the circle $|x'|= R$, and we have used the Gauss theorem.  

It follows from \eqref{eq_3_15} that $|\phi(x',x'')|=\mathcal{O}(1/|x'|)$ as $|x'|\to \infty$.  Hence, we have 
\[
e^\phi=1+\phi+\mathcal{O}(|\phi|^2)=1+\phi+\mathcal{O}(|x'|^{-2})\quad \textrm{as}\quad  |x'|\to \infty. 
\]
Since
\begin{align*}
\int_{|x'|= R}(\nu_1+i\nu_2)dS_R(x')=\int_{|x'|\le R}(\p_{x_1}+\p_{x_2}) (1)dx'=0,\\
\bigg|Ê\int_{|x'|= R}\mathcal{O}(|x'|^{-2})(\nu_1+i\nu_2)dS_R(x')\bigg|\le \mathcal{O}(R^{-1})\quad \textrm{as}\quad R\to \infty, 
\end{align*}
we obtain that 
\begin{align*}
h(x'')=\lim_{R\to \infty} \int_{|x'|= R}\phi(x)(\nu_1+i\nu_2)dS_R(x')&=\lim_{R\to \infty} \int_{|x'|\le  R} (\p_{x_1}+i\p_{x_2})\phi(x)dx'\\
&=-\int_{\R^2} i(\mu_1+i\mu_2)\cdot W(x)dx',
\end{align*}
which shows the claim. The proof is complete. 

\end{proof}

Combining Proposition \ref{prop_eskin_ralston} with \eqref{eq_with_phases_R_n} and \eqref{eq_sum_phase}, we get 
\begin{equation}
\label{eq_without_phases}
(\mu_1+i\mu_2)\cdot\int_{\R^n} (A_1(x)-A_2(x)) e^{ix\cdot\xi} dx=0. 
\end{equation}

The next step is to derive from \eqref{eq_without_phases} that $dA_1=dA_2$ in $\R^n$. While this step is well-known, see \cite[Lemma 4.8]{Salo_diss}, we present it here for completeness and for convenience of the reader.  

\begin{prop}
\label{prop_curl_w}
Let $W\in C_0(\R^n,\C^n)$. Assume that 
\begin{equation}
\label{eq_without_phases_2}
(\mu_1+i\mu_2)\cdot\int_{\R^n} W(x) e^{ix\cdot\xi} dx=0,
\end{equation}
whenever $\xi,\mu_1,\mu_2\in\R^n$ are such that $|\mu_1|=|\mu_2|=1$ and $\mu_1\cdot\mu_2=\mu_1\cdot\xi=\mu_2\cdot\xi=0$. Then $dW=0$ in $\R^n$. 
\end{prop}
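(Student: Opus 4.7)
The plan is to pass to the Fourier transform and exploit the hypothesis componentwise. Since $W \in C_0(\R^n,\C^n)$ is compactly supported and continuous, $\widehat W(\xi) := \int_{\R^n} W(x) e^{ix\cdot\xi} dx \in C^\infty(\R^n,\C^n)$, and the hypothesis \eqref{eq_without_phases_2} rewrites as
\[
(\mu_1 + i\mu_2)\cdot \widehat W(\xi) = 0
\]
whenever $\mu_1, \mu_2$ are orthonormal and both orthogonal to $\xi$.

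The first step is to observe that if $(\mu_1,\mu_2)$ is admissible for a given $\xi$, then so is $(\mu_1,-\mu_2)$. Applying the hypothesis to both pairs and then adding and subtracting yields
\[
\mu_1 \cdot \widehat W(\xi) = 0 \quad \text{and} \quad \mu_2 \cdot \widehat W(\xi) = 0.
\]
Since $n \ge 3$, for every fixed $\xi \neq 0$ any unit vector $\mu \in \xi^\perp$ can be completed to such an orthonormal pair inside $\xi^\perp$, so $\mu \cdot \widehat W(\xi) = 0$ for all $\mu \in \xi^\perp$. Hence $\widehat W(\xi)$ is parallel to $\xi$, i.e.\ there exists a scalar function $c$ on $\R^n \setminus \{0\}$ with $\widehat W(\xi) = c(\xi)\, \xi$.

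The second step is to read off $dW = 0$ by computing the Fourier transform of each component $\partial_{x_j} W_k - \partial_{x_k} W_j$. Indeed, its Fourier transform is $i\xi_j \widehat W_k(\xi) - i\xi_k \widehat W_j(\xi)$, and substituting $\widehat W_\ell(\xi) = c(\xi) \xi_\ell$ gives
\[
i\xi_j\bigl(c(\xi)\xi_k\bigr) - i\xi_k\bigl(c(\xi)\xi_j\bigr) = 0
\]
for $\xi \neq 0$. By continuity of $\widehat{dW}$ this vanishes on all of $\R^n$, and inverting the Fourier transform gives $dW = 0$ in $\R^n$ as distributions, which is the required conclusion.

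I do not expect any serious obstacle: the only real content is the linear-algebra step that the admissible $(\mu_1,\mu_2)$ span $\xi^\perp$, which requires $n \ge 3$ (exactly the standing dimensional hypothesis), and the rest is a routine Fourier-side computation of the curl. The compact support of $W$ ensures $\widehat W$ is a smooth function so that pointwise statements at $\xi \ne 0$ make sense and componentwise differentiation under the Fourier transform is justified.
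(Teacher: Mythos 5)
Your proof is correct and follows essentially the same route as the paper: both reduce \eqref{eq_without_phases_2} to the statement that $\hat W(\xi)$ is orthogonal to every real $\mu\perp\xi$ (the paper asserts this and then picks $\mu_{jk}(\xi)=\xi_j e_k-\xi_k e_j$, whereas you spell out the orthonormal-completion step and phrase it as $\hat W(\xi)=c(\xi)\xi$, but these are equivalent), and then both read off $\widehat{\partial_j W_k-\partial_k W_j}=0$. One cosmetic slip: with the convention $\hat W(\xi)=\int W e^{ix\cdot\xi}\,dx$ one has $\widehat{\partial_{x_j}f}=-i\xi_j\hat f$, not $+i\xi_j\hat f$; this sign does not affect the conclusion.
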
 

\begin{proof}
It follows from \eqref{eq_without_phases_2} that 
\begin{equation}
\label{eq_without_phases_3}
\mu\cdot \hat W(\xi)=0
\end{equation}
whenever $\mu\in\R^n$ is such that $\mu\cdot\xi=0$. Here $\hat W$ is the Fourier transform of $W$. 
Let $\mu_{jk}(\xi)=\xi_j e_k-\xi_k e_j$ for $j\ne k$.  Then $\mu_{jk}(\xi)\cdot\xi=0$, and therefore, \eqref{eq_without_phases_3} implies that 
\[
\xi_j\hat W_k(\xi)-\xi_k\hat W_j(\xi)=0. 
\]
Hence, $\p_{x_j}W_k-\p_{x_k}W_j=0$ in $\R^n$ in the sense of distributions,  for $j\ne k$. The proof is complete. 

\end{proof}

By Proposition \ref{prop_curl_w} we conclude from \eqref{eq_without_phases} that $dA_1=dA_2$ in $\R^n$. Therefore,  $A_1-A_2=\nabla \psi$ in $\R^n$, with some $\psi\in C^1_0(\R^n)$. 

Next we would like to show that $q_1=q_2$ in $\Omega$. To that end, we first want to add $\nabla \psi$ to the potential $A_2$ without changing the set of the Cauchy data. 
When doing so, it will be convenient to work on a large open ball $B$ such that $\Omega\subset\subset B$ and $\supp(\psi)\subset B$. 
We shall need the following result, see \cite[Lemma 4.2]{Salo_diss}.

\begin{prop}
\label{prop_Cauchy_data}
Let $\Omega\subset \R^n$ be a bounded open set with $C^1$ boundary,  and let $B$ be a open ball in $\R^n$ such that $\Omega\subset\subset B$.  Let $A_1,A_2\in L^\infty(B,\C^n)$, and  $q_1,q_2\in L^\infty(B,\C)$. Assume that 
\[
A_1=A_2\quad\textrm{and}\quad q_1=q_2\quad \textrm{in}\quad B\setminus\Omega.
\]
If  $C_{A_1,q_1}=C_{A_2,q_2}$ then $C'_{A_1,q_1}=C'_{A_2,q_2}$, where $C'_{A_j,q_j}$ is the set of the Cauchy data for $L_{A_j,q_j}$ in $B$, $j=1,2$. 
\end{prop}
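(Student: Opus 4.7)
The plan is to prove $C'_{A_1,q_1}\subset C'_{A_2,q_2}$ by a cut-and-paste construction; the reverse inclusion follows by symmetry. Start with an arbitrary $u_1\in H^1(B)$ satisfying $L_{A_1,q_1}u_1=0$ in $B$. Then $u_1|_\Omega\in H^1(\Omega)$ is a solution of $L_{A_1,q_1}u=0$ in $\Omega$, so its Cauchy pair $(u_1|_{\p\Omega},(\p_\nu u_1+i(A_1\cdot\nu)u_1)|_{\p\Omega})$ lies in $C_{A_1,q_1}$. By the hypothesis $C_{A_1,q_1}=C_{A_2,q_2}$, there exists $v_2\in H^1(\Omega)$ solving $L_{A_2,q_2}v_2=0$ in $\Omega$ with
\[
v_2|_{\p\Omega}=u_1|_{\p\Omega},\qquad \p_\nu v_2+i(A_2\cdot\nu)v_2=\p_\nu u_1+i(A_1\cdot\nu)u_1\quad\textrm{on}\quad\p\Omega.
\]
Define the candidate $u_2\in L^2(B)$ by $u_2=v_2$ on $\Omega$ and $u_2=u_1$ on $B\setminus\overline\Omega$. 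Because the $H^{1/2}(\p\Omega)$ traces from the two sides coincide, $u_2$ belongs to $H^1(B)$.

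The next step, and the main point of the proof, is to check that $L_{A_2,q_2}u_2=0$ in $B$. Fix a test function $\varphi\in C^\infty_0(B)$ and compute $\langle L_{A_2,q_2}u_2,\varphi\rangle_B$ via the weak formulation \eqref{eq_trace}, splitting the integral as $\int_B=\int_\Omega+\int_{B\setminus\overline\Omega}$. On $\Omega$, using $L_{A_2,q_2}v_2=0$, the integrand equals $\langle\p_\nu v_2+i(A_2\cdot\nu)v_2,\varphi\rangle_{\p\Omega}$. On $B\setminus\overline\Omega$, the assumption $A_1=A_2$, $q_1=q_2$ lets me rewrite the integrand with $(A_1,q_1)$; since $L_{A_1,q_1}u_1=0$ and $\varphi$ vanishes on $\p B$, this piece equals $-\langle\p_\nu u_1+i(A_1\cdot\nu)u_1,\varphi\rangle_{\p\Omega}$, the minus sign accounting for the outward normal of $B\setminus\overline\Omega$ along $\p\Omega$ being $-\nu$. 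The matching co-normal derivatives on $\p\Omega$ then make the two boundary contributions cancel, so $\langle L_{A_2,q_2}u_2,\varphi\rangle_B=0$ for every test function.

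Finally, since $u_2\equiv u_1$ in the collar $B\setminus\overline\Omega$ near $\p B$, and $A_2=A_1$ there as well, the traces $u_2|_{\p B}=u_1|_{\p B}$ and $(\p_\nu u_2+i(A_2\cdot\nu)u_2)|_{\p B}=(\p_\nu u_1+i(A_1\cdot\nu)u_1)|_{\p B}$ coincide, so the two Cauchy pairs on $\p B$ agree. This shows $C'_{A_1,q_1}\subset C'_{A_2,q_2}$, and the symmetric argument gives equality.

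I expect the only delicate point to be the sign bookkeeping in the weak identity when integrating by parts over $B\setminus\overline\Omega$ (noting that $\p(B\setminus\overline\Omega)=\p B\cup\p\Omega$, that $\varphi$ is supported away from $\p B$, and that the outward unit normal to $B\setminus\overline\Omega$ along $\p\Omega$ is the opposite of $\nu$); once this is handled, the cancellation of the two $\p\Omega$ boundary terms is automatic from the equality of co-normal derivatives, and everything else is routine.
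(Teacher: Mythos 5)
Your proposal is correct and follows essentially the same route as the paper: restrict the global solution to $\Omega$, replace it there by the matching solution for $(A_2,q_2)$ furnished by $C_{A_1,q_1}=C_{A_2,q_2}$, glue using the equality of $H^{1/2}(\p\Omega)$ traces, and verify $L_{A_2,q_2}u_2=0$ in $B$ by splitting the weak formulation into $\int_\Omega+\int_{B\setminus\overline\Omega}$ and cancelling the $\p\Omega$ boundary terms via the matching co-normal data. Your sign bookkeeping with the outward normal of $B\setminus\overline\Omega$ along $\p\Omega$ is exactly the cancellation mechanism in the paper's displayed computation.
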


\begin{proof}
Let $u_1'\in H^1(B)$ be a solution to $L_{A_1,q_1}u_1'=0$ in $B$. As $C_{A_1,q_1}=C_{A_2,q_2}$, there is $u_2\in H^1(\Omega)$
satisfying $L_{A_2,q_2}u_2=0$ in $\Omega$ such that
\[
u_1'=u_2\quad \textrm{and}\quad \p_\nu u_1'+i(A_1\cdot \nu) u_1'=\p_\nu u_2+i(A_2\cdot \nu)u_2\quad \textrm{on}\quad \p \Omega. 
\]
Here $\nu$ is the unit outer normal to $\p\Omega$.  Setting 
\[
u_2'=\begin{cases} u_2\quad \textrm{in}\quad \Omega,\\
u_1'\quad \textrm{in}\quad B\setminus\Omega,
\end{cases}
\]
and using the fact that $\p \Omega$ is $C^1$, we see that $u_2'\in H^1(B)$. 
Furthermore, for $\psi\in C^\infty_0(B)$,  we get 
\begin{align*}
\langle L_{A_2,q_2}u_2',&\psi \rangle_{B}=  \int_{\Omega}(\nabla u_2\cdot\nabla \psi+ A_2\cdot (Du_2)\psi-A_2u_2\cdot D\psi+ (A_2^2+q_2)u_2\psi)dx\\
&+\int_{B\setminus\Omega}(\nabla u_1'\cdot\nabla \psi+ A_1\cdot (Du_1')\psi-A_1u_1'\cdot D\psi+ (A_1^2+q_1)u_1'\psi)dx\\
&=\langle \p_\nu u_2+i(A_2\cdot \nu)u_2 ,\psi \rangle_{\p \Omega}- \langle \p_\nu u_1'+i(A_1\cdot \nu) u_1',\psi \rangle_{\p \Omega}=0,
\end{align*}
which shows that $C'_{A_1,q_1}\subset C'_{A_2,q_2}$. The same argument in the other direction gives the claim. 
\end{proof}

Using Proposition \ref{prop_Cauchy_data}, the fact that $\psi|_{\p B}=0$, and \eqref{eq_invariance_cauchy}, we obtain that 
\[
C'_{A_1,q_1}=C'_{A_2,q_2}=C'_{A_2+\nabla\psi,q_2}=C'_{A_1,q_2}.
\]
Thus, by Proposition \ref{prop_eq_int_identity} we obtain the integral identity \eqref{eq_int_identity}, which in our case takes the following form, 
\begin{equation}
\label{eq_int_identity_new_2}
\int_B(q_1-q_2)u_1\overline{u_2}dx=0,
\end{equation}
for any $u_1,u_2\in H^1(B)$ satisfying $L_{A_1,q_1}u_1=0$ in $B$ and $L_{\overline{A_1},\overline{q_2}}u_2=0$ in $B$, respectively.  

Let us choose $u_1$ and $u_2$ to be the complex geometric optics solutions in $B$, given by \eqref{eq_u_1} and \eqref{eq_u_2}, respectively.  Notice that in our case, in the definition \eqref{eq_u_2} of $u_2$, the  function $\Phi_2^\sharp(x,-\mu_1+i\mu_2;h)$ converges uniformly in $\R^n$ to $N_{-\mu_1+i\mu_2}^{-1}(-i(-\mu_1+i\mu_2)\cdot \overline{A_1})$ as $h\to 0$.  Hence, it follows from \eqref{eq_sum_phase} that 
$\Phi_1^\sharp (x,\mu_1+i\mu_2;h)+ \overline{\Phi_2^\sharp (x,-\mu_1+i\mu_2;h)}$ converges uniformly in $\R^n$ to $0$ as $h\to 0$. 

Plugging $u_1$ and $u_2$ in \eqref{eq_int_identity_new_2} gives
\[
\int_B(q_1-q_2)e^{ix\cdot\xi} e^{\Phi_1^\sharp+\overline{\Phi_2^\sharp}}dx=-\int_B(q_1-q_2)e^{ix\cdot\xi} (e^{\Phi_1^\sharp}\overline{r_2}+r_1e^{\overline{\Phi_2^\sharp}}+r_1\overline{r_2})dx.
\]
Letting $h\to 0$, and using \eqref{eq_est_phi_1}, \eqref{eq_est_r_1}, \eqref{eq_est_phi_2}, and  \eqref{eq_est_r_2}, we get 
\[
\int_B(q_1-q_2)e^{ix\cdot\xi}dx=0,
\] 
and therefore, $q_1=q_2$ in $\Omega$. The proof of Theorem \ref{thm_main} is complete.

\section{Boundary determination of the magnetic potential}

\label{sec_boundary_rec}

When recovering the magnetic potentials in Theorem \ref{thm_main}, an important step consists in determining the boundary values of the tangential components of the magnetic potentials. The purpose of this section is to carry out this step by adapting the method of  \cite{Brown_Salo_2006}.  Compared with the latter work, here we do not assume that the Dirichlet problem for the magnetic Schr\"odinger operator in $\Omega$ is well posed. 

To circumvent the difficulty related to the fact that zero may be a Dirichlet eigenvalue,  we shall use the solvability result for the magnetic Schr\"odinger operator given in Proposition \ref{prop_solvability},  which is based on a Carleman estimate. We have learned of the idea of using a Carleman estimate to handle the case when zero is a Dirichlet eigenvalue from the work \cite{Salo_Tzou_2009} on the Dirac operator. 

The following proposition is an extension of the result of \cite{Brown_Salo_2006} in the sense that the well-posedness of the Dirichlet problem for the magnetic Schr\"odinger operator in $\Omega$ is no longer assumed.

\begin{prop} 
\label{prop_boundary_rec}
Let $\Omega\subset \R^n$, $n\ge 3$, be a bounded domain with $C^1$ boundary, and 
let $A_j\in C(\overline{\Omega},\C^n)$ and $q_j\in L^\infty(\Omega,\C)$, $j=1,2$.  Assume that $C_{A_1,q_1}=C_{A_2,q_2}$. 
Then 
$\tau\cdot (A_1- A_2)(x_0)=0$,
for all points $x_0\in \p \Omega$ and all unit tangent vectors $\tau\in T_{x_0}(\p \Omega)$. 
\end{prop}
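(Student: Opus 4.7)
The plan is to reprove Proposition \ref{prop_boundary_rec} by adapting the boundary wave packet method of \cite{Brown_Salo_2006}, replacing the Dirichlet solvability used there by the Carleman-based solvability of Proposition \ref{prop_solvability}. The starting point is the same integral identity as in Proposition \ref{prop_eq_int_identity}: the hypothesis $C_{A_1,q_1}=C_{A_2,q_2}$ gives
\[
\int_\Omega i(A_1-A_2)\cdot(u_1\nabla\overline{u_2}-\overline{u_2}\nabla u_1)\,dx + \int_\Omega (A_1^2-A_2^2+q_1-q_2)u_1\overline{u_2}\,dx = 0
\]
for every $u_1,u_2\in H^1(\Omega)$ with $L_{A_1,q_1}u_1=0$ and $L_{\overline{A_2},\overline{q_2}}u_2=0$. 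Crucially, the identity constrains $u_j$ only to solve the equations, with no boundary data matching, which is exactly what releases the argument from any hypothesis on the Dirichlet spectrum.

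Fix $x_0\in\partial\Omega$ and a unit tangent vector $\tau\in T_{x_0}(\partial\Omega)$. A $C^1$ boundary flattening near $x_0$ allows us to arrange $x_0=0$, $\nu(x_0)=e_n$, $\tau=e_1$, with $\Omega$ lying locally in $\{x_n<0\}$. Take $\zeta_0=e_n+ie_1$, which satisfies $\zeta_0\cdot\zeta_0=0$ and $|\Re\zeta_0|=|\Im\zeta_0|=1$, so that $|e^{x\cdot\zeta_0/h}|=e^{x_n/h}$ decays exponentially away from $x_0$ into $\Omega$ on the scale $h$. Following Section \ref{sec_CGO}, solve the first transport equation \eqref{eq_2_9_phi} with $A_1^\sharp$ and with $\overline{A_2}^\sharp$ to obtain amplitudes $e^{\Phi_1^\sharp}$ and $e^{\Phi_2^\sharp}$, and multiply by an anisotropic cutoff $\chi(x'/h^{1/2},x_n/h)$ adapted to the boundary packet. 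Proposition \ref{prop_solvability} applied with the linear (hence limiting) Carleman weight $\varphi=-x_n$ then yields corrections $r_j\in H^1(\Omega)$ with $\|r_j\|_{H^1_{\mathrm{scl}}(\Omega)}=o(1)$, upgrading the approximate wave packets to exact solutions
\[
u_j(x;h)=e^{x\cdot\zeta_0/h}\bigl(\chi\,e^{\Phi_j^\sharp}+r_j\bigr),\qquad j=1,2,
\]
of $L_{A_1,q_1}u_1=0$ and $L_{\overline{A_2},\overline{q_2}}u_2=0$, respectively.

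Insert $u_1,u_2$ into the integral identity. Differentiation of $e^{x\cdot\zeta_0/h}$ brings out $\zeta_0/h$ and of $\overline{e^{x\cdot\zeta_0/h}}$ brings out $\overline{\zeta_0}/h$, so the leading part of $u_1\nabla\overline{u_2}-\overline{u_2}\nabla u_1$ is proportional to $(\overline{\zeta_0}-\zeta_0)/h=-2i\tau/h$ times $e^{2x_n/h}e^{\Phi_1^\sharp+\overline{\Phi_2^\sharp}}$. After multiplying the identity by an appropriate power of $h$ and rescaling $x_n=hy_n$, $x'=h^{1/2}y'$, the exponential $e^{2x_n/h}$ localizes the integration at $x_0$. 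The terms containing $r_j$ vanish in the limit thanks to the $o(1)$ bound; the terms with $A_j^2+q_j$ carry an additional power of $h$ and also vanish; the uniform convergence of $\Phi_j^\sharp$ granted by Lemma \ref{lem_salo_1} and \eqref{eq_salo_1}, combined with continuity of $A_1,A_2$ at $x_0$, lets us replace the integrand, up to $o(1)$, by its evaluation at $x_0$. What is left is a fixed nonzero Gaussian-type integral multiplied by $\tau\cdot(A_1-A_2)(x_0)$, which the identity forces to vanish.

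The main obstacle is the handling of the Carleman correction $r_j$: unlike in the Dirichlet setting of \cite{Brown_Salo_2006}, the correction supplied by Proposition \ref{prop_solvability} is not controlled on $\partial\Omega$, so one cannot separate boundary and interior contributions in the classical way. The identity of Proposition \ref{prop_eq_int_identity} sidesteps this by containing only interior integrals, and the semiclassical estimate $\|r_j\|_{H^1_{\mathrm{scl}}}=o(1)$ is precisely what is needed to absorb $r_j$ into the lower-order part of the asymptotics. The technical core is then balancing the anisotropic cutoff scale, the mollification scale $h^\sigma$ with $0<\sigma<1/2$ from Section \ref{sec_CGO}, and the loss $h^{-1}$ in Proposition \ref{prop_solvability}, so that the WKB remainder has $H^{-1}_{\mathrm{scl}}$-norm strictly $o(h)$ while the leading boundary-concentrated term still survives the $h\to 0$ limit; this is exactly where the continuity hypothesis on $A_1,A_2$, rather than mere boundedness, enters the argument.
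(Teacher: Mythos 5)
The proposal replaces the paper's Brown--Salo boundary wave packets by a truncated CGO ansatz from Section \ref{sec_CGO} and, critically, couples the Carleman parameter $h$ in Proposition \ref{prop_solvability} to the boundary concentration scale. This is a genuinely different route from the paper's, and it does not close.

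\textbf{The $h^{-1}$ loss in Proposition \ref{prop_solvability} is fatal when $h$ is used as the concentration scale.} In the paper the Carleman solvability is invoked with $h$ \emph{small but fixed}: it is used purely as an $H^{-1}(\Omega)\to H^1(\Omega)$ solvability theorem (the $C/h$ becomes a harmless constant), while the boundary concentration is driven by the separate large parameters $M$ and $N$ of \cite{Brown_Salo_2006}. In your proposal $h$ plays both roles. With the anisotropic cutoff $\chi(x'/h^{1/2},x_n/h)$, the terms $h^2\partial_n^2\chi$ and $2ih\,\zeta_0\cdot D\chi$ in the conjugated residual are $O(1)$ in $L^\infty$ and supported on a set of volume $\sim h^{(n+1)/2}$, so $\|g_j\|_{H^{-1}_{\mathrm{scl}}}\gtrsim h^{(n+1)/4}$. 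Then Proposition \ref{prop_solvability} only gives $\|r_j\|_{L^2(\Omega)}\lesssim h^{(n-3)/4}$; for $n=3$ this is $O(1)$, and in any case it is not $o(1)$ relative to the wave packet, whose weighted $L^2$ size is $\|\chi e^{\Phi_1^\sharp}e^{x_n/h}\|_{L^2}\sim h^{(n+1)/4}$. Substituting into the integral identity, the leading contribution of $u_1\nabla\overline{u_2}-\overline{u_2}\nabla u_1$ is of order $h^{(n-1)/2}$, while the cross term with $r_2$ is of order $h^{-1}\cdot h^{(n+1)/4}\cdot\|r_2\|_{L^2}\gtrsim h^{(n-3)/2}$, i.e. $h^{-1}$ larger than the main term. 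You cannot fix this by rescaling the cutoff: for any anisotropic scales $h^\alpha$, $h^\beta$ ($0<\alpha,\beta<1$) the resulting requirement reduces to $\beta/2-\max(\alpha,\beta)>1/2$, which is impossible. So the remainder supplied by Proposition \ref{prop_solvability} with $h\to0$ is systematically $h^{-1}$ too large, and the error terms dominate.

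\textbf{The intermediate Dirichlet Laplacian correction cannot be dropped.} Even if one follows the paper and applies Proposition \ref{prop_solvability} at a fixed $h$, the residual of the raw profile $v_0$ contains $\Delta v_0$, whose $H^{-1}$ norm is of size $M^{(1-n)/2}N^{1/2}$ --- a factor $N$ too large. The paper first solves the Dirichlet problem $-\Delta v_1=\Delta v_0$, $v_1|_{\partial\Omega}=0$, so that $v_0+v_1$ is harmonic and the Carleman correction only needs to absorb the first-order magnetic terms, whose $H^{-1}$ norm is controlled by $\|\delta\nabla(v_0+v_1)\|_{L^2}\lesssim M^{(1-n)/2}N^{-1/2}$ together with Hardy's inequality. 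Your ansatz has no analogue of $v_1$, so even the structure of the argument is missing a necessary step.

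\textbf{A secondary gap.} Your phase $x\cdot\zeta_0/h$ and tangential scale $h^{1/2}$ implicitly assume the boundary is $C^{1,1}$. For a merely $C^1$ boundary the paper uses the defining function in the phase, $e^{N(i\tau\cdot x-\rho(x))}$, and calibrates the cutoff via the modulus of continuity $\omega$ of $\nabla\rho$ through $M^{-1}\omega(M^{-1})=N^{-1}$; this balance disappears in your affine-phase picture. Your diagnosis that Proposition \ref{prop_solvability} is what removes the Dirichlet well-posedness assumption is correct, but the way it is inserted here does not deliver remainders small enough to survive the limiting argument.
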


\begin{proof}
We shall follow  \cite{Brown_Salo_2006} closely.  First an application of Proposition \ref{prop_eq_int_identity} allows us to conclude that following integral identity holds,
\begin{equation}
\label{eq_int_identity_4_1}
\int_\Omega i(A_1-A_2)\cdot (u_1\nabla \overline{u_2}-\overline{u_2}\nabla u_1)dx+\int_\Omega(A_1^2-A_2^2+q_1-q_2)u_1\overline{u_2}dx=0.
\end{equation}
Here $u_1,u_2\in H^1(\Omega)$ satisfy $L_{A_1,q_1}u_1=0$ in $\Omega$ and $L_{\overline{A_2},\overline{q_2}}u_2=0$ in $\Omega$, respectively.   

The idea now  is to construct some special solutions to the magnetic Schr\"odinger equations, whose boundary values have an oscillatory behavior while becoming increasingly concentrated near a given point on the boundary of 
$\Omega$.  Substituting these solutions in \eqref{eq_int_identity_4_1} will allow us to recover the tangential components of the magnetic potentials along the boundary.

As $\Omega$ is a $C^1$-domain, it has a defining function $\rho\in C^1(\R^n,\R)$ such that $\Omega=\{x\in \R^n:\rho(x)>0\}$, $\p \Omega=\{x\in \R^n:\rho(x)=0\}$, and 
$\nabla \rho$ does not vanish on $\p \Omega$. We fix $x_0\in \p \Omega$, and a unit vector $\tau$, which is tangent to $\p \Omega$ at $x_0$. We normalize $\rho$ so that $\nabla \rho(x_0)=-\nu(x_0)$ where $\nu$ is the unit outer normal to $\p \Omega$. 
By an affine change of coordinates we may assume that $x_0$ is the origin and $\nu(x_0)=-e_n$, and therefore, $\nabla \rho(0)=e_n$. 

Let $\omega(t)$, $t\ge 0$, be a modulus of continuity for $\nabla \rho$, which is  an increasing continuous function, such that  
$\omega(0)= 0$. 
Let $\eta\in C^\infty_0(\R^n,\R)$ be a function such that $\supp (\eta)\subset B(0,1/2)$, and
\[
\int_{\R^{n-1}}\eta(x',0)^2dx'=1,
\]
where $B(0,1/2)$ is a ball of radius $1/2$, centered at $0$, and $x'=(x_1,\dots,x_{n-1})$.
We set $\eta_M(x)=\eta(Mx',M\rho(x))$, for $M>0$. Hence, for $M>0$ large enough, $\supp(\eta_M)\subset B(0,1/M)$. 
Following \cite{Brown_Salo_2006}, for $N>0$, we define $v_0$ by
\begin{equation}
\label{eq_7_1}
v_0(x)=\eta_M(x)e^{N(i\tau\cdot x-\rho(x))}.
\end{equation}
The function $v_0$ is of class $C^1$ with $\supp(v_0)\subset B(0,1/M)$.  Following   \cite{Brown_Salo_2006}, we relate the large parameters $N$ and $M$  by the equation 
\begin{equation}
\label{eq_7_2}
M^{-1}\omega(M^{-1})=N^{-1}. 
\end{equation}
 Since $\omega(t)\to 0$ as $t\to +0$, there is $M_0$ such that $\omega(M^{-1})<1$ for $M>M_0$. We shall assume that $M>M_0$ and therefore, $N>M$.

Let $v_1\in H^1(\Omega)$ be the solution to the following Dirichlet problem for the Laplacian,
\begin{align*}
-\Delta v_1&=\Delta v_0, \quad\textrm{in}\quad \Omega,\\
v_1|_{\p \Omega}&=0.   
\end{align*} 

In what follows we shall need Hardy's inequality,
\begin{equation}
\label{eq_Hardy}
\int_{\Omega}|f(x)/\delta(x)|^2 dx\le C\int_{\Omega}|\nabla f(x)|^2dx,
\end{equation}
where $f\in H^1_0(\Omega)$ and $\delta(x)$ denotes the distance from $x$ to the boundary of $\Omega$.  The constant $C$ in \eqref{eq_Hardy} depends only on  the geometry of $\Omega$.

We shall also need the following estimates, obtained in  \cite{Brown_Salo_2006},
\begin{equation}
\label{eq_7_3}
\|v_0\|_{L^2(\Omega)}\le CM^{(1-n)/2}N^{-1/2},
\end{equation}
\begin{equation}
\label{eq_7_4}
\|v_1\|_{L^2(\Omega)}\le CM^{(1-n)/2}N^{-1/2},
\end{equation}
\begin{equation}
\label{eq_7_15}
\|\nabla v_1\|_{L^2(\Omega)}\le C\omega(M^{-1})N^{1/2}M^{(1-n)/2},
\end{equation}
\begin{equation}
\label{eq_Brown_Salo_2_18}
\|\delta\nabla v_0\|_{L^2(\Omega)}\le C M^{(1-n)/2}N^{-1/2},
\end{equation}
and 
\begin{equation}
\label{eq_Brown_Salo_2_20}
\|\delta\nabla (v_0+v_1)\|_{L^2(\Omega)}\le C M^{(1-n)/2}N^{-1/2}. 
\end{equation}

Next we would like to show the  existence of a solution $u_1\in H^1(\Omega)$ to the magnetic Schr\"odinger equation
\begin{equation}
\label{eq_7_10}
L_{A_1,q_1}u_1=0\quad \textrm{in}\quad \Omega,
\end{equation}
of the form 
\begin{equation}
\label{eq_7_11}
u_1=v_0+v_1+r_1,
\end{equation}
with 
\begin{equation}
\label{eq_7_12}
\|r_1\|_{H^1(\Omega)}\le  CM^{(1-n)/2}N^{-1/2}.
\end{equation}
To that end, plugging \eqref{eq_7_11} into \eqref{eq_7_10}, we obtain the following equation for $r_1$,
\[
L_{A_1,q_1} r_1=-A_1\cdot D(v_0+v_1)- m_{A_1}(v_0+v_1)  -(A_1^2+q_1)(v_0+v_1)\quad \textrm{in}\quad \Omega.
\]
Applying Proposition \ref{prop_solvability} with $h>0$ small but fixed, we conclude the existence of $r_1\in H^1(\Omega)$ such that
\begin{equation}
\label{eq_right_hand_r_1}
\|r_1\|_{H^1(\Omega)}\le C\|A_1\cdot D(v_0+v_1)+ m_{A_1}(v_0+v_1)  +(A_1^2+q_1)(v_0+v_1)\|_{H^{-1}(\Omega)}.
\end{equation}
Let us now compute the norm in the right hand side of \eqref{eq_right_hand_r_1}. To that end, let $\psi\in C^\infty_0(\Omega)$. Then using \eqref{eq_Brown_Salo_2_20} and \eqref{eq_Hardy}, we get
\begin{equation}
\label{eq_r_1_est_1}
\begin{aligned}
|\langle A_1\cdot D(v_0+v_1) ,\psi\rangle_{\Omega}|
\le \|A_1\|_{L^\infty(\Omega)}\|\delta\nabla(v_0+v_1) \|_{L^2(\Omega)}\|\psi/\delta\|_{L^2(\Omega)}\\
\le C M^{(1-n)/2}N^{-1/2}\|\nabla \psi\|_{L^2(\Omega)}\le C M^{(1-n)/2}N^{-1/2}\| \psi\|_{H^1(\Omega)}.
\end{aligned}
\end{equation}
Using \eqref{eq_7_3} and \eqref{eq_7_4}, we obtain that 
\begin{equation}
\label{eq_r_1_est_2}
\begin{aligned}
|\langle & m_{A_1}(v_0+v_1)  ,\psi\rangle_{\Omega}|=\bigg| \int_{\Omega} A(v_0+v_1)\cdot D\psi dx\bigg|\\
&\le \|A_1\|_{L^\infty(\Omega)}\|v_0+v_1\|_{L^2(\Omega)}\|\nabla \psi\|_{L^2(\Omega)}\le C M^{(1-n)/2}N^{-1/2}\| \psi\|_{H^1(\Omega)},
\end{aligned}
\end{equation}
and 
\begin{equation}
\label{eq_r_1_est_3}
\begin{aligned}
|\langle  (A_1^2+q_1)(v_0+v_1) ,\psi\rangle_{\Omega}|\le C M^{(1-n)/2}N^{-1/2}\| \psi\|_{H^1(\Omega)}.
\end{aligned}
\end{equation}
The estimate  \eqref{eq_7_12} follows from \eqref{eq_right_hand_r_1}, \eqref{eq_r_1_est_1},  \eqref{eq_r_1_est_2}, and \eqref{eq_r_1_est_3}.

Similarly, there exists a solution $u_2\in H^1(\Omega)$ of $L_{\overline{A_2},\overline{q_2}}u_2=0$ in $\Omega$ of the form 
\begin{equation}
\label{eq_7_16}
u_2=v_0+v_1+r_2,
\end{equation}
where $r_2\in H^1(\Omega)$ satisfies \eqref{eq_7_12}.  

The next step is to substitute $u_1$ and $u_2$, given by \eqref{eq_7_11} and \eqref{eq_7_16} into the identity \eqref{eq_int_identity_4_1}, multiply it by $M^{n-1}$ and compute the limit as $M\to \infty$. 
We write 
\begin{equation}
\label{eq_def_I}
I:= M^{n-1}\int_\Omega i (A_1-A_2)\cdot (u_1\nabla\overline{u_2}-\overline{u_2}\nabla u_1)dx=I_1+I_2+I_3+I_4+I_5,
\end{equation}
where 
\begin{align*}
I_1&= M^{n-1}\int_\Omega i (A_1-A_2)\cdot (v_0\nabla\overline{v_0}-\overline{v_0}\nabla v_0)dx,\\
I_2&=M^{n-1}\int_\Omega i (A_1-A_2)\cdot (u_1\nabla\overline{v_1}-\overline{u_2}\nabla v_1)dx,\\
I_3&= M^{n-1}\int_\Omega i (A_1-A_2)\cdot (u_1\nabla\overline{r_2}-\overline{u_2}\nabla r_1)dx,\\
I_4&= M^{n-1}\int_\Omega i (A_1-A_2)\cdot (v_1\nabla\overline{v_0}-\overline{v_1}\nabla v_0)dx,\\
I_5&= M^{n-1}\int_\Omega i (A_1-A_2)\cdot (r_1\nabla\overline{v_0}-\overline{r_2}\nabla v_0)dx.
\end{align*}
Let us compute $\lim_{M\to \infty}I_1$. To that end we have
\begin{equation}
\label{eq_v_0_grad}
\nabla v_0=(\nabla\eta_M(x)+\eta_M(x)N(i\tau-\nabla\rho(x))e^{N(i\tau\cdot x-\rho(x))},
\end{equation}
and 
\[
v_0\nabla\overline{v_0}-\overline{v_0}\nabla v_0=-2i\eta_M^2(x)e^{-2N\rho(x)}N\tau.
\]
Thus, using the following formulas 
\[
\lim_{M\to \infty} M^{n-1}N\int_\Omega e^{-2N\rho(x)}\eta_M^2(x)dx=\frac{1}{2}\int_{\R^{n-1}}\eta^2(x',0)dx'=\frac{1}{2},
\]
\begin{equation}
\label{eq_Brown_Salo_l21}
\int_\Omega e^{-2N\rho(x)}\eta_M^2(x)dx\le CM^{1-n}N^{-1},
\end{equation}
obtained in \cite{Brown_Salo_2006}, and the fact that $A_1,A_2\in C(\overline{\Omega})$, we get
\begin{equation}
\label{eq_7_18}
\begin{aligned}
\lim_{M\to \infty} I_1&=2  ((A_1-A_2)(0)\cdot\tau)\lim_{M\to \infty}M^{n-1}N\int_\Omega \eta^2_M(x)e^{-2N\rho(x)}dx\\
&+2\lim_{M\to \infty}M^{n-1}N\int_\Omega ((A_1-A_2)(x)-(A_1-A_2)(0))\cdot\tau \eta^2_M(x)e^{-2N\rho(x)}dx\\
&=(A_1-A_2)(0)\cdot\tau.
\end{aligned}
\end{equation}

Now it follows from  \eqref{eq_7_3}, \eqref{eq_7_4} and \eqref{eq_7_12}  that
\begin{equation}
\label{eq_7_17}
\|u_j\|_{L^2(\Omega)}\le CM^{(1-n)/2}N^{-1/2},\quad j=1,2.
\end{equation}

From the estimates \eqref{eq_7_15} and \eqref{eq_7_17}, we obtain that 
\begin{equation}
\label{eq_I_2}
|I_2|\le M^{n-1}\|A_1-A_2\|_{L^\infty(\Omega)}(\|u_1\|_{L^2(\Omega)}+ \|u_2\|_{L^2(\Omega)})\|\nabla v_1\|_{L^2(\Omega)}
\le C\omega(M^{-1}).
\end{equation}

By \eqref{eq_7_12} and \eqref{eq_7_17}, we have
\begin{equation}
\label{eq_I_3}
\begin{aligned}
|I_3|&\le M^{n-1}\|A_1-A_2\|_{L^\infty(\Omega)}(\|u_1\|_{L^2(\Omega)}\|\nabla r_2\|_{L^2(\Omega)}
+ \|u_2\|_{L^2(\Omega)}\|\nabla r_1\|_{L^2(\Omega)})\\
&\le CN^{-1}=CM^{-1}\omega(M^{-1}).
\end{aligned}
\end{equation}

Using the fact that $v_1\in H^1_0(\Omega)$, Hardy's inequality \eqref{eq_Hardy}, and the estimates  \eqref{eq_Brown_Salo_2_18},  and \eqref{eq_7_15}, we get 
\begin{equation}
\label{eq_I_4}
\begin{aligned}
|I_4|&\le 2M^{n-1}\|A_1-A_2\|_{L^\infty(\Omega)}\|v_1/\delta\|_{L^2(\Omega)}\|\delta\nabla v_0\|_{L^2(\Omega)}\\
&\le CM^{n-1}\|\nabla v_1\|_{L^2(\Omega)}\|\delta\nabla v_0\|_{L^2(\Omega)}\le C\omega(M^{-1}).
\end{aligned}
\end{equation}

Let us now estimate $|I_5|$. To that end we shall first obtain an estimate for $\|\nabla v_0\|_{L^2(\Omega)}$. 
We have, using \eqref{eq_v_0_grad}, 
\begin{equation}
\label{eq_nabla_v_0_1}
\begin{aligned}
|\nabla v_0(x)|^2=&e^{-2N\rho(x)}(|\nabla\eta_M(x)|^2-2(\nabla \eta_M(x))\cdot  \eta_M(x) N\nabla \rho(x)\\
&+\eta_M^2(x)N^2(1+|\nabla\rho(x)|^2))\\
&\le 2 e^{-2N\rho(x)}|\nabla\eta_M(x)|^2+ 2 e^{-2N\rho(x)} \eta_M^2(x)N^2(1+|\nabla\rho(x)|^2)).
\end{aligned}
\end{equation}
By \eqref{eq_Brown_Salo_l21} we get 
\begin{equation}
\label{eq_nabla_v_0_2}
\int_\Omega e^{-2N\rho(x)} \eta_M^2(x)N^2(1+|\nabla\rho(x)|^2))dx\le C M^{1-n} N. 
\end{equation}
Next we have
\begin{align*}
|\nabla\eta_M(x)|^2=&M^2\sum_{j=1}^{n-1}\bigg( (\p_{y_j} \eta)(Mx',M\rho(x)) + (\p_{x_j}\rho)(x)(\p_{y_n}\eta) (Mx',M\rho(x))  \bigg)^2\\
&+ M^2 (\p_{x_n}\rho)^2(x)(\p_{y_n}\eta)^2 (Mx',M\rho(x)),
\end{align*}
and therefore, using the estimates
\[
\int_\Omega e^{-2N\rho(x)}(\p_{y_j}\eta)^2 (Mx',M\rho(x))dx\le CM^{1-n}N^{-1},
\]
obtained in \cite[Lemma 2.1]{Brown_Salo_2006}, we get 
\begin{equation}
\label{eq_nabla_v_0_3}
\int_\Omega e^{-2N\rho(x)} |\nabla\eta_M(x)|^2 dx\le CM^2 M^{1-n}N^{-1}\le C M^{1-n} N.
\end{equation}
Thus, it follows from \eqref{eq_nabla_v_0_1}, \eqref{eq_nabla_v_0_2}, and \eqref{eq_nabla_v_0_3} that 
 \begin{equation}
\label{eq_nabla_v_0}
\|\nabla v_0\|_{L^2(\Omega)}\le CM^{(1-n)/2} N^{1/2}.
\end{equation}
A direct computation also  shows that 
\begin{equation}
\label{eq_v_0_boundary}
\| v_0\|_{L^2(\p \Omega)}\le CM^{(1-n)/2}.
\end{equation}

Let us extend $A:=A_1-A_2\in C(\overline{\Omega},\C^n)$ to a continuous compactly supported vector field on the whole of $\R^n$. We consider the regularization $A^\sharp=A*\Psi_\tau\in C_0^\infty(\R^n,\C^n)$. Here $\tau>0$ is small and 
$\Psi_\tau(x)=\tau^{-n}\Psi(x/\tau)$ is the usual mollifier with $\Psi\in C^\infty_0(\R^n)$, $0\le \Psi\le 1$, and 
$\int \Psi dx=1$.   Then the following estimates hold, 
\begin{equation}
\label{eq_flat_est_new}
\|A-A^\sharp\|_{L^\infty(\R^n)}=o(1), \quad \tau \to 0,
\end{equation}
\begin{equation}
\label{eq_flat_est_2_new}
 \|\p^\alpha A^\sharp\|_{L^\infty(\R^n)}=\mathcal{O}(\tau^{-|\alpha|}),\quad \tau\to 0, \quad \textrm{for all}\quad \alpha.
\end{equation}
 Let us consider 
  \begin{align*}
J:= M^{n-1}\int_{\Omega} A\cdot r_1\nabla \overline{v_0} dx=J_1+J_2,
 \end{align*}
where
\begin{align*}
J_1= M^{n-1}\int_{\Omega} (A-A^\sharp)\cdot r_1\nabla \overline{v_0} dx,\quad 
J_2= M^{n-1}\int_{\Omega} A^\sharp\cdot r_1\nabla \overline{v_0} dx.
\end{align*}
Using \eqref{eq_7_12},  \eqref{eq_nabla_v_0}, and \eqref{eq_flat_est_new}, we get 
\[
|J_1|\le M^{n-1}\|A-A^\sharp\|_{L^\infty(\Omega)}\|r_1\|_{L^2(\Omega)}\|\nabla v_0\|_{L^2(\Omega)}\le C\|A-A^\sharp\|_{L^\infty(\Omega)}=o(1),
\]
as $ \tau\to 0$. To estimate $J_2$, it is no longer sufficient to use the bound \eqref{eq_nabla_v_0}, and therefore, we shall integrate by parts. We get 
\[
J_2=J_{2,1}+J_{2,2}+J_{2,3},
\]
where  
\begin{align*}
 &J_{2,1}= -M^{n-1}\int_{\Omega} (\nabla \cdot A^\sharp) r_1 \overline{v_0} dx,\quad J_{2,2} =- M^{n-1}\int_{\Omega} A^\sharp \cdot (\nabla r_1)\overline{v_0} dx,\\
 &J_{2,3}= M^{n-1}\int_{\p \Omega} (A^\sharp\cdot\nu) r_1\overline{v_0} dS.
\end{align*}

By \eqref{eq_7_3}, \eqref{eq_7_12}, and \eqref{eq_flat_est_2_new}, we have
\[
|J_{2,1}|\le M^{n-1}\|\nabla \cdot A^\sharp\|_{L^\infty(\Omega)}\|r_1\|_{L^2(\Omega)}\|v_0\|_{L^2(\Omega)}\le C\tau^{-1} N^{-1},
\] 
and 
\[
|J_{2,2}|\le M^{n-1}\|A^\sharp\|_{L^\infty(\Omega)} \|\nabla r_1\|_{L^2(\Omega)}\|v_0\|_{L^2(\Omega)} \le CN^{-1}. 
\]
Using the trace theorem, the estimates \eqref{eq_7_12} and \eqref{eq_v_0_boundary}, we get
\[
|J_{2,3}|\le CM^{n-1}\|A^\sharp\cdot\nu\|_{L^\infty(\p \Omega)}\|r_1\|_{H^1(\Omega)}\|v_0\|_{L^2(\p \Omega)}\le CN^{-1/2}. 
\]
Choosing $\tau=N^{-1/2}$, we conclude that $|J|=o(1)$ as $M\to \infty$, and therefore, 
\begin{equation}
\label{eq_I_5}
|I_5|=o(1),\quad M\to \infty. 
\end{equation}

Hence, for $I$, defined by \eqref{eq_def_I}, using \eqref{eq_7_18}, \eqref{eq_I_2}, \eqref{eq_I_3}, \eqref{eq_I_4}, and  \eqref{eq_I_5}, we get 
\begin{equation}
\label{eq_4_1_first}
\lim_{M\to \infty} I=(A_1-A_2)(0)\cdot \tau. 
\end{equation}

Furthermore, for $u_1$ and $u_2$, given by \eqref{eq_7_11} and \eqref{eq_7_16}, respectively, using \eqref{eq_7_17},  we obtain that 
\begin{equation}
\label{eq_4_1_second}
M^{n-1}\bigg|\int_\Omega (A_1^2-A_2^2+q_1-q_2)u_1\overline{u_2}dx \bigg|\le CM^{n-1}\|u_1\|_{L^2(\Omega)}\|u_2\|_{L^2(\Omega)}\le CN^{-1}.
\end{equation}

Thus, we conclude from \eqref{eq_int_identity_4_1} with the help of \eqref{eq_4_1_first} and \eqref{eq_4_1_second} that $(A_1-A_2)(0)\cdot \tau=0$. The proof is complete. 
\end{proof}

\section*{Acknowledgements}  

The research of K.K. is partially supported by the
Academy of Finland (project 255580).   The research of
G.U. is partially supported by the National Science Foundation.

\end{document}